\documentclass[10pt]{article}

\pdfoutput=1
\usepackage[pdftex]{color}
\usepackage{amssymb}
\usepackage{amsthm} 
\usepackage{amsmath}
\usepackage{latexsym}
\usepackage{amscd}
\usepackage{graphicx}
\usepackage[pdftex, colorlinks=true, citecolor=green]{hyperref}
\usepackage{lscape}
\usepackage{multirow}
\usepackage{wrapfig}

\setlength{\textwidth}{6.5in}\setlength{\hoffset}{-0.75in}

\setlength{\textheight}{9.45in} \setlength{\voffset}{-1in}

\newcommand{\ds}{\displaystyle}

\newcommand{\ben}{\begin{equation}}     
\newcommand{\eeqn}{\end{equation}}
\newcommand{\bey}{\begin{eqnarray}}
\newcommand{\eey}{\end{eqnarray}}


\newtheorem{thm}{Theorem}[section]

\newtheorem{lemma}[thm]{Lemma}
\newtheorem{corol}[thm]{Corollary}

\newtheorem{conj}[thm]{Conjecture}

\begin{document}


\vspace{5mm}
\noindent {\Large
\textbf{Effects of local mutations in quadratic iterations}
}
\\\\
\indent \emph{Anca R\v{a}dulescu$^{*,}\footnote{Associate Professor, Department of Mathematics, State University of New York at New Paltz; New York, USA; Phone: (845) 257-3532; Email: radulesa@newpaltz.edu}$, Abraham Longbotham$^{2}$, Ashelee Collier$^1$}

\vspace{2mm}
\indent $^1$ Department of Mathematics, SUNY New Paltz
\\
\indent $^2$ Department of Physics,  SUNY New Paltz


\begin{abstract}
We introduce mutations in replication systems in which the intact copying mechanism is performed by discrete iterations of a complex quadratic map in the family $f_c(z) = z^2+c$. More specifically, we consider a ``correct" function $f_{c_1}$ acting on the complex plane (representing the RNA to be copied). A ``mutation"  $f_{c_0}$ is a different (``erroneous'') map acting on a locus of given radius $r$ around a mutation focal point $\xi^*$. The effect of the mutation is interpolated radially to eventually recover the original map $f_{c_1}$ when reaching an outer radius $R$. We call the resulting map a ``mutated'' map. 

In the theoretical framework of mutated iterations, we study how a mutation (replication error) affects the temporal evolution of the system, in the context of cellular differentiation. We use the prisoner set of the system to quantify simultaneously the long-term behavior of the entire space under mutated maps. We analyze how the position, timing and size of the mutation can alter the system's long-term evolution (as encoded in the topology of its prisoner set). In the context of genetics, this framework may increase our understanding of the factors and mechanisms that shape the genetic expression, in a specialized cell, in the process of differentiation from a stem cell.
\end{abstract}

\vspace{5mm}
\noindent {\bf Keywords:} replication error, perturbed iterations, prisoner set, cell differentiation.

\section{Introduction}

\subsection{Modeling cell differentiation}

All cells in any living multicellular organism originate from stem cells~\cite{frank2007dynamics}. These are pluripotent cells,  which can diversify into a wide array of cell types, with dramatic structural and functional differences. To achieve this variety of cellular profiles, required for the function of the organism as a whole, stem cells undergo a process of differentiation, which determines their future morphology, physical properties (such as cross membrane potential and responsiveness to electrical signals), metabolic activity, etc. Empirical studies have shown that cell differentiation generally depends on highly controlled changes in gene expression, rather than on any changes in the nucleotide sequence of the cell's genome~\cite{gilbert2010developmental,alberts2017molecular}. This differentiation path is modulated by a variety of factors, including environmental influences, signaling from other cells and developmental stage~\cite{brun2020fit}. A variety of mathematical contexts and methods have been used to date to model the process of cell differentiation~\cite{pazdziorek2014mathematical,prokharau2014mathematical,li2023mathematical}, including limited physiological detail, so that analytical tractability is preserved.

However, in the process of differentiation, RNA replication may not be perfect. Empirical studies  found that there are a variety of mutations that can affect the differentiation process in significant and impactful ways. For example, a study of induced pluripotent stem cells (iPSCs) found that such marked differences in gene expression between incorrectly and successfully differentiated lines contribute to inhibition of neurogenesis in the former (with the results obtained in culture likely reflecting similar associations in actual brain development). The factors and mechanisms behind these significant variations remain largely unknown, even with the current state of the art technoligies~\cite{puigdevall2023somatic}. This makes it very difficult to assemble a mathematical model of mutation that considers all factors impacting gene expression that are valuable in a physiological context. Existing models focus on one factor at a time, and typically on very specific cell types and circumstances~\cite{ryman2008effect}. However, some of the important questions that remain unanswered are very general, such as comparing the degree of divergence for different mutation rates, or comparing effects of mutation in the early versus later phases of the divergence~\cite{ryman2008effect}. This suggests that a general theoretical framework could be useful for contextualizing these questions in a canonical model, in which the specific details have been removed, retaining only the phenomenology of a DNA replication mechanism subject to mutations.

\subsection{Modeling intact replication}

In our work, we use a complex dynamics model to help us better represent theoretically the principles underlying perfect and imperfect cell differentiation, and to analyze the difference between them. We use the complex plane $\mathbb{C}$ to represent symbolically a cell's RNA, with each point in the plane corresponding to the expression of a single gene. We define a perfect replication system as the one-dimensional, discrete time dynamical system generated by iterations of the \emph{intact} transformation$f \colon \mathbb{C} \to \mathbb{C}, \; f_{c_1} = z^2+c_1$, with $c_1 \in \mathbb{C}$, applied on the whole complex plane. Our modeling premise is that the initial points $z_0$ that remain asymptotically bounded under iterations of the replicator $f_{c_1}$ represent those ``genes'' that are expressed in the terminally differentiated cell (i.e., the cell resulting at the end of the differentiation process). 

The locus of the points with bounded orbits under $f_{c_1}$ is widely known in the complex dynamics literature as the prisoner set ${\cal P}(f_{c_1})$. Hence, in our model, the prisoner set ${\cal P}(f_{c_1})$ is a symbolic representation of the gene expression in the perfectly differentiated cell. Along these lines, one can think of different parameters $c_1$ as different built-in replicators, which encompass all internal and external factors that determine the type of cell to be obtained. Different cell types will therefore have different terminal genetic expression, represented by corresponding prisoner sets ${\cal P}(f_{c_1})$ with different topologies (see Figure~\ref{intact_prisoner}).

The traditional theory of complex dynamics in the quadratic family provides an escape radius for the iterative process, that is: if the orbit of a point $z_0$ exits the disc of radius two at any iteration step, it will never return to the disc, and will escape to infinity (i.e., $z_0$ is not in the prisoner set). This property can be capitalized as a mechanism of tracking the replication step where a gene is discarded as a potential contender for the genetic locus expressed in the terminally differentiated cell. This is an interesting path to investigate, since the temporal evolution from stem cells to their differentiated children is characterized by complex changes in cellular morphology and function.

Rephrasing theoretical questions on RNA replication and cell differentiation in terms of complex map iterations presents a mathematically approachable and canonical way that may lead to in interesting results (both in terms of
mathematical novelty, and improving our understanding of mechanisms behind gene expression). Complex plane iterations represent good modeling candidates: not only is their dynamics most thoroughly studied and well understood, but it can also be illustrated geometrically by sets with extraordinarily
complex, but topologically quantifiable structure. This structure can be used to understand, describe and
classify the long term dynamics of the system, and are perhaps not far from the actual physiological complexity of cellular structure and genetics~\cite{albrecht2012fractal,dokukin2015emergence,moreno2011human}.

\begin{figure}[h!]
\begin{center}
\includegraphics[width=0.7\textwidth]{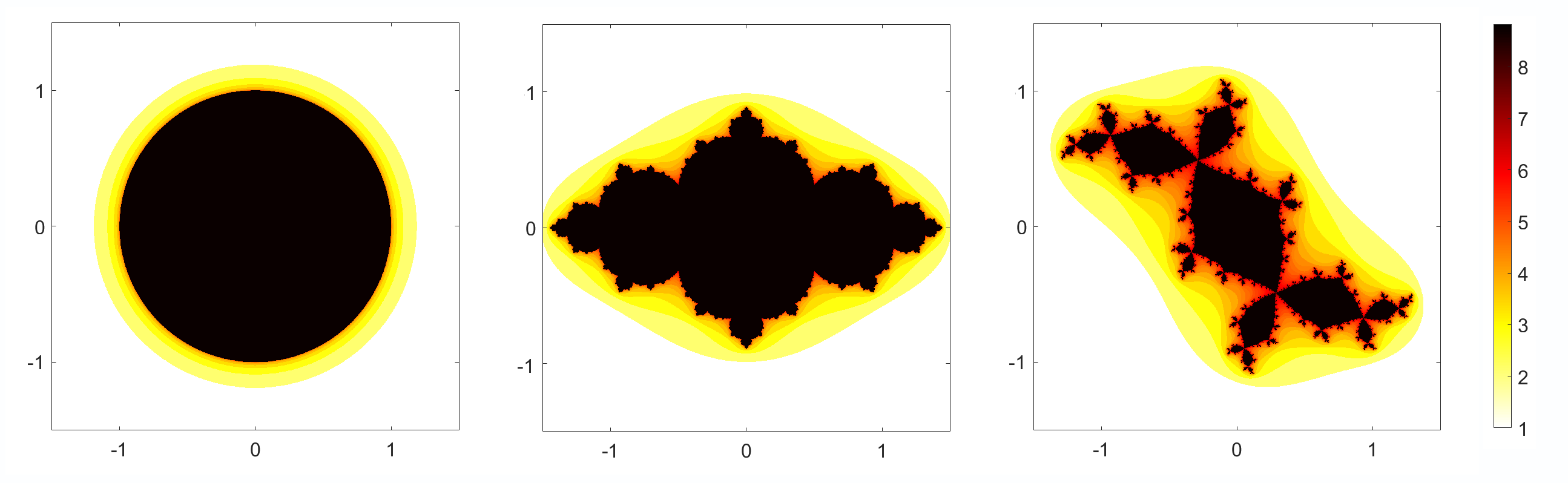}
\end{center}
\caption{\small \emph{{\bf Prisoner sets corresponding to intact mutations} for the generating parameters $c_1=0$ (unit disc), $c_1=-0.65$ (basilica) and $c_1=-0.13+0.77i$ (Douady rabbit). These can be interpreted as being the gene expression loci for three different types of cells, with different morphology and function (e.g., epithelial cell, kidney cell and neuron). In each panel, the prisoner set is shown in black (the points that never escape). The other colors represent the number of steps in which each point in $\mathbb{C}$ escapes the disc or radius $R_e=2$, under iterations of the corresponding map $f_{c_1}$ (in our interpretation, this represents the replication step where the gene stops being carried forward in the expression locus). The same color map will be used in all our other illustrations of prisoner sets, which are all computed in $800 \times 800$ resolution, and based on 100 total iterations.}}
\label{intact_prisoner}
\end{figure}

\subsection{Modeling mutation}

In this context, a mutation can be seen as an error which causes the replication mechanism $f_{c_1}$ to act as a perturbed replicator $f_{c_0}$ at some or all of the replication steps. Since mutations do not typically affect the whole cellular RNA, but rather a local portion of it (often an isolated gene), it makes sense to define a mutation as a local perturbation of the correct function $f_{c_1}$, such that the erroneous replication $f_{c_0}$ acts only on a small region of the complex plane (region which will be allowed to vary in size from one focal point to a considerably larger locus). For this study, we assume this mutation to be persistent (i.e., to act upon the same region in the plane at each iteration step). Within this setup, we investigate to what extent a local persistent mutation can affect the final genetic expression in the terminal differentiation of the cell, by monitoring the shape and properties of the prisoner set. In future work we will expand our model to include more general mutations that can act only temporarily, or can switch in time between target loci.

\color{black}

More precisely, a mutation $f_{c_0} = z^2+c_0$ is set to act on a disc of radius $r \geq 0$ around the focal point $\xi^*$, radiating a broader transient effect on a larger disc of radius $R \geq r$, but with no further effect to the correct function outside of this disc (i.e., points that fall outside the disc or radius $R$ at one time step will be simply iterated under the intact map $f_{c_1}$ at the next time step). The function on the annulus represents a fading effect of the mutation $c_0$ applied inside the smaller disc, transitioning towards the correct transformation $c_1$ defined outside the larger disc. To accomplish this effect, we chose the following generating function for our mutated iterations:

\begin{equation}
f(z) = \left\{  \begin{array}{ll} f_{c_0}(z), & \lvert z - \xi^*\rvert \leq r \\  f_{01}(z), & r < \lvert z -\xi^* \rvert < R\\ f_{c_1}(z), & \lvert z-\xi^* \rvert \geq R \end{array} \right.
\label{function}
\end{equation}

\noindent  defined with radial symmetry. More precisely, for $0\leq t \leq1$, and $\theta \in [0,2\pi]$, then we define the transition function on the annulus as
$$f_{01}(z(t, \theta)) = [z(t, \theta)]^2 + c_0(1-t) + c_1t$$

\noindent where $\rho(t) = r(1-t) + Rt$ and $z(t,\theta) = \xi^*+ \rho(t)[\cos(\theta) + i \sin(\theta)]$. For simplicity, in the rest of the paper we will restrict ourselves to the case $\xi^* =0$, so that the mutation is centered at the origin (see Figure~\ref{basic_mutation}). Under this assumption,  for $\rho = \lvert z \rvert = r(1-t) + Rt$ (with $0\leq t\leq 1$), we can rewrite:
$$f_{01}(z)=z^2 + c_0\frac{R-\rho}{R-r} + c_1\frac{\rho -r}{R-r}$$

\begin{figure}[h!]
\begin{center}
\includegraphics[width=0.7\textwidth]{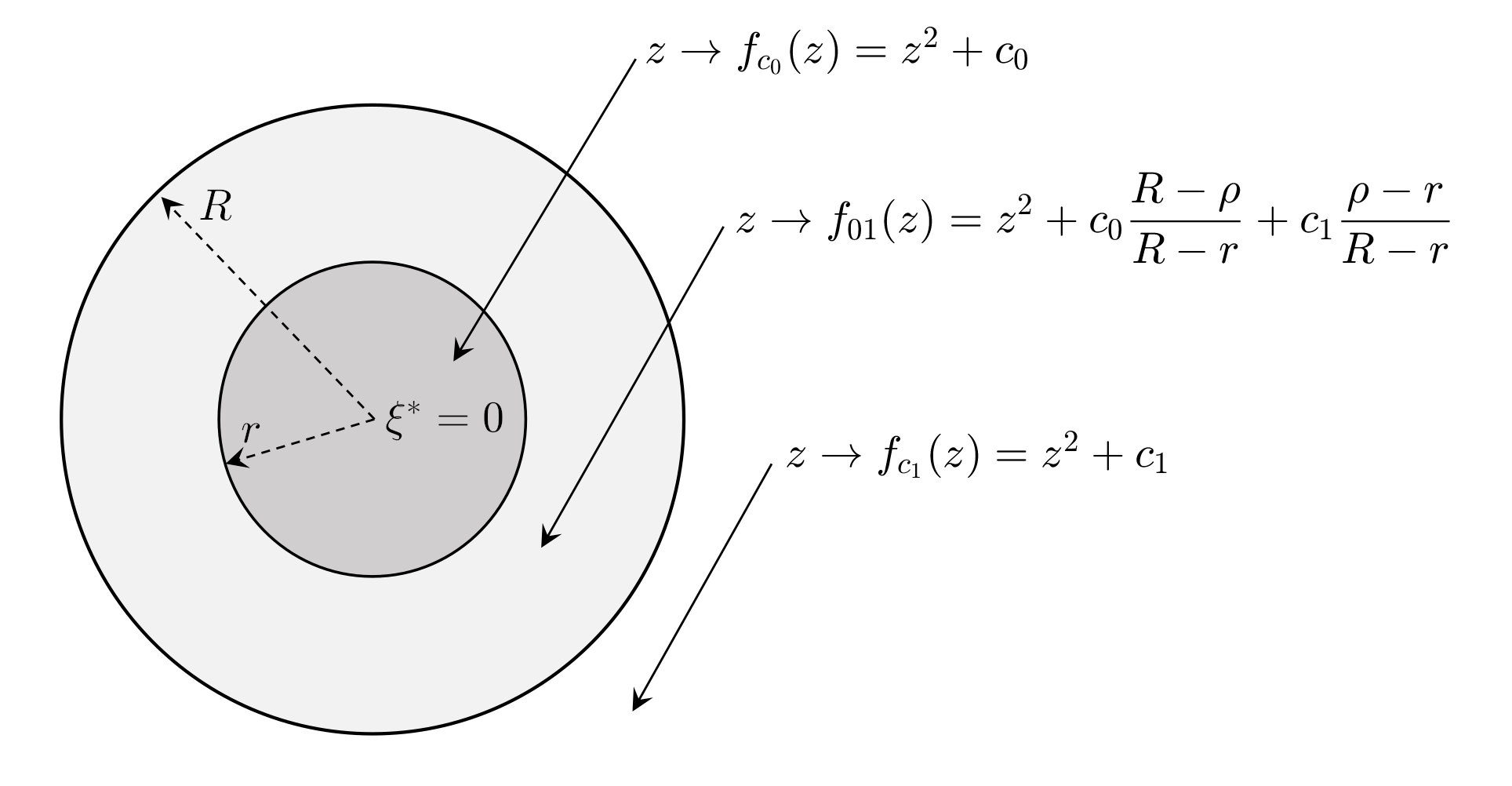}
\end{center}
\caption{\small \emph{{\bf Mutation applied at the origin,} illustrating the mutation focus, mutation disk and transition annulus, and showing the corresponding formulas for the iterated function $z \to f(z)$ on each portion of the complex plane.}}
\label{basic_mutation}
\end{figure}

\noindent Clearly, one expects to see differences between the prisoner sets of the intact function ${\cal P}(f_{c_1})$ and that of the mutated system ${\cal P}(f)$, with genes which are in the expression locus of the intact terminal differentiation, but not in the locus of the mutated differentiation, and conversely. Our modeling framework allows us to study how the differences between the two expression loci depend on the parameters $\xi^*$, $r$, $R$, $c_1$ and $c_0$. Since, as previously discussed, the actual physiological factors that underlie mutations are largely unknown, this represents an efficient theoretical framework to investigate, interpret and compare the consequences of changing the position, size or extent of mutation.\\

\color{black}
\noindent The rest of the manuscript is organized as follows. In Section~\ref{methods}, we describe the basic properties of the model considered in this paper. In Section~\ref{results}, we present analytical results and numerical simulations that illustrate the effects of local mutations on the replication outcome. In Section~\ref{discussion}, we interpret the results, discuss their relevance to understanding the role of mutations in genetic expression, and describe potential extensions and ideas for future work.

\section{Analysis of mutated systems}
\label{methods}

\begin{lemma} The function constructed in~\eqref{function} is continuous.
\end{lemma}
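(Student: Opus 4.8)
The plan is to exploit that all three branches of $f$ share the same quadratic term $z^2$, so that the entire obstruction to continuity lives in the additive ``coefficient'' term, which depends on $z$ only through the radius $\rho = |z|$. Concretely, I would write $f(z) = z^2 + h(|z|)$, where $h \colon [0,\infty) \to \mathbb{C}$ is the radial profile given by $h(\rho) = c_0$ for $\rho \in [0,r]$, by $\ds h(\rho) = c_0\frac{R-\rho}{R-r} + c_1\frac{\rho-r}{R-r}$ for $\rho \in (r,R)$, and by $h(\rho) = c_1$ for $\rho \in [R,\infty)$. Since $z \mapsto z^2$ is continuous on all of $\mathbb{C}$ and $z \mapsto |z|$ is continuous on all of $\mathbb{C}$, the continuity of $f$ reduces to the continuity of the single real-variable function $h$: if $h$ is continuous, then $z \mapsto h(|z|)$ is continuous as a composition of continuous maps, and $f$ is then a sum of continuous functions.

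It remains to check that $h$ is continuous on $[0,\infty)$. On each of the three subintervals $h$ is either constant or an affine function of $\rho$, hence continuous there, so the only candidate points of failure are the junction radii $\rho = r$ and $\rho = R$. I would verify that the one-sided limits match by substituting the endpoint values into the interpolation formula: at $\rho = r$ the weights become $(1,0)$, giving $h(r) = c_0$ in agreement with the inner constant branch; at $\rho = R$ the weights become $(0,1)$, giving $h(R) = c_1$ in agreement with the outer constant branch. This is exactly what the affine weights $\ds \frac{R-\rho}{R-r}$ and $\ds \frac{\rho-r}{R-r}$ were designed to achieve. Hence $h$ is continuous everywhere, and the reduction above delivers continuity of $f$ on $\mathbb{C}$.

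There is no deep obstacle here; the content is purely the boundary matching on the two circles, and the interpolation was constructed precisely so that this matching holds. The one point that deserves a word of care is the degenerate configuration $r = R$: then the annulus $r < |z| < R$ is empty and the interpolation formula is undefined (its denominator vanishes), so the statement should be read under the hypothesis $r < R$ for which the transition region is genuine; in the limit $r = R$ the map reduces to the two constant branches meeting on the single circle $|z| = r$, where continuity would instead force $c_0 = c_1$ (i.e.\ no effective mutation). A second, milder remark is that $z \mapsto |z|$ fails to be differentiable at the origin, but since only continuity is claimed this causes no difficulty, and the inner branch is in any case the smooth map $z^2 + c_0$ there.
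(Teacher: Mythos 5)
Your proof is correct, but it takes a different route from the paper's. The paper argues directly by an $\varepsilon$--$\delta$ estimate at a point $z_0$ on the circle $|z_0|=r$: it bounds $|f(z)-f(z_0)| \leq |z^2-z_0^2| + |c_1-c_0|\cdot\frac{\rho-r}{R-r}$, controls both terms by $\delta$, and then declares the analogous check at $|z|=R$ ``similar.'' You instead factor the map as $f(z) = z^2 + h(|z|)$ with $h$ a piecewise-affine radial profile, reduce continuity of $f$ to continuity of the one-variable function $h$ via composition and sum of continuous maps, and verify that the interpolation weights $\frac{R-\rho}{R-r}$ and $\frac{\rho-r}{R-r}$ give the matching values $h(r)=c_0$ and $h(R)=c_1$. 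Your decomposition is cleaner and more conceptual: it eliminates all estimation, handles both boundary circles simultaneously rather than deferring one to ``a similar reasoning,'' and makes visible that continuity is precisely the endpoint matching the interpolation was built to satisfy. The paper's explicit estimate buys something you do not get for free, namely a quantitative modulus of continuity ($\delta$ expressed in terms of $\varepsilon$, $r$, $R$, $c_0$, $c_1$), though nothing later in the paper uses it. Your caveat about the degenerate case $r=R$ (where the interpolation formula has a vanishing denominator) is a legitimate point that the paper leaves implicit; the lemma should indeed be read under $r<R$.
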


\proof{The function is trivially continuous within each of the three regions $| z | < r$, $r < |z| < R$ and $|z| >R$. We yet have to prove that the function is continuous on the boundaries between these regions. We will prove continuity at an arbitrary point $z=z_0$ on the smaller circle of radius $r$.

Suppose $| z_0 | =r$, and fix $\varepsilon>0$. We want to find $\delta>0$ such that, for all $| z | \geq r$ with $|z-z_0| < \delta$, we have $|f(z)-f(z_0)| <\varepsilon$. For any $| z | \geq r$, we calculate.
\begin{equation}
|f(z)-f(z_0)| = \left \lvert z^2 + c_0\frac{R-\rho}{R-r} + c_1\frac{\rho -r}{R-r} - z_0^2 -c_0 \right \rvert \leq |z^2-z_0^2| + |c_1-c_0| \cdot \frac{\rho-r}{R-r} \nonumber
\end{equation}

\noindent Notice that $\rho-r =|z|-|z_0| \leq |z-z_0| < \delta$ and that $|z+z_0| \leq |z-z_0| + 2|z_0| < \delta +2r$. Hence:
$$|f(z)-f(z_0)| \leq \delta(\delta+2r) + \frac{|c_1-c_0|\delta}{R-r} = \delta \left[ \delta +2r +\frac{\lvert c_1-c_0 \rvert}{R-r} \right]$$

\noindent It follows immediately that, for $\delta$ sufficiently small, $|f(z)-f(z_0)| <\varepsilon$. A similar reasoning can apply to show continuity at an arbitrary point on the larger circle of radius $R$, hence the proof will be skipped.

\hfill \qed}\\

\begin{lemma} The function constructed in~\eqref{function} is not analytic for $c_0 \neq c_1$.
\end{lemma}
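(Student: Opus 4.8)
The plan is to exploit the fact that, on the transition annulus $r < |z| < R$, the branch $f_{01}$ depends on the modulus $\rho = |z|$, which is not a holomorphic function of $z$. Since any function analytic on all of $\mathbb{C}$ must in particular be analytic on this open annulus, it suffices to show that $f$ fails to be analytic there. First I would rewrite the annular branch in a more transparent form: collecting the two weighted terms gives
\[
f(z) = z^2 + \frac{c_0 R - c_1 r}{R-r} + \frac{c_1 - c_0}{R-r}\,|z|,
\qquad r < |z| < R,
\]
so that $f$ equals the analytic function $z^2$, plus a constant, plus a complex multiple $\kappa = (c_1-c_0)/(R-r)$ of the real-valued modulus $|z|$. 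Here I use $R > r$, which makes the annulus nonempty and $\kappa$ well defined, together with the hypothesis $c_0 \neq c_1$, which guarantees $\kappa \neq 0$.

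The central step is to compute the Wirtinger derivative $\partial f/\partial \bar z$, recalling that a function is analytic precisely when this derivative vanishes. Since $\partial_{\bar z} z^2 = 0$ and the constant term contributes nothing, only the modulus term matters. Writing $|z| = (z\bar z)^{1/2}$ and differentiating gives $\partial_{\bar z}|z| = z/(2|z|)$, whence
\[
\frac{\partial f}{\partial \bar z} = \kappa\,\frac{z}{2|z|}.
\]
Because $z \neq 0$ throughout the annulus (as $|z| > r \geq 0$) and $\kappa \neq 0$, this expression is nonzero at every point of the annulus. Hence $f$ violates the analyticity criterion on an entire open set, and therefore cannot be analytic on $\mathbb{C}$.

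I do not expect a serious obstacle; the argument is essentially the observation that $|z|$ is not holomorphic. The only points requiring care are bookkeeping ones: confirming that the constant combination $(c_0 R - c_1 r)/(R-r)$ genuinely drops out under $\partial_{\bar z}$, treating $\kappa$ as a possibly complex nonzero constant rather than assuming it real, and noting that the modulus is differentiable in the real sense on the annulus (since $z \neq 0$ there) so that the Wirtinger calculus applies. As an alternative to the $\partial_{\bar z}$ computation, one could separate $f$ into real and imaginary parts on the annulus and verify directly that the Cauchy--Riemann equations fail; this is equivalent but more cumbersome, so I would present the $\partial_{\bar z}$ version as the primary argument.
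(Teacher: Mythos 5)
Your proof is correct and takes essentially the same approach as the paper: both arguments show that the holomorphy criterion fails on the transition annulus precisely because the interpolation term depends on $\rho = |z|$, you via the Wirtinger condition $\partial f/\partial\bar z \neq 0$ and the paper via the equivalent Cauchy--Riemann equations written in polar coordinates. Your isolation of the term $\kappa\,|z|$ with $\kappa = (c_1-c_0)/(R-r)$ makes the computation slightly cleaner, but the substance is identical.
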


\proof{We use the Cauchy-Riemann conditions in polar coordinates to determine whether $f$ is analytic within the annulus of radius $r < \rho < R$, where the function is defined as: 
$$f(z) = f(\rho, \theta) = \rho^2[\cos (2\theta) + i\sin (2\theta)] + c_0\frac {R-\rho}{R-r} + c_1\frac {\rho-r}{R-r}$$

\noindent If we write the complex parameters as $c_0=a_0+ib_0$, and $c_1=a_1+ib_1$, then $\ds u = \text{Re}(f) = \rho^2\cos2\theta + a_0\frac {R-\rho}{R-r} + a_1\frac {\rho-r}{R-r}$ and $\ds v = \text{Im}(f) = \rho^2\sin2\theta + b_0\frac {R-\rho}{R-r} + b_1\frac {\rho-r}{R-r}$. We compute\\

$\ds \frac{\partial u}{\partial \rho} = 2\rho\cos(2\theta) + \frac {a_1-a_0}{R-r} \; \text{ and } \; \frac{\partial u}{\partial \theta} = -2\rho^2 \sin(2\theta)$\\

$\ds \frac{\partial v}{\partial \rho} = 2\rho \sin(2\theta) +\frac{b_1-b_0}{R-r} \; \text{ and } \; \frac{\partial v}{\partial \theta} =2\rho^2 \cos(2\theta)$\\

\noindent The Cauchy-Riemann conditions are equivalent in this care with:

$$\frac{\partial u}{\partial \rho} = \frac {1}{\rho}\frac {\partial v}{\partial \theta} \; \Longleftrightarrow \; \frac{a_1-a_0}{R-r} = 0$$ 

$$\frac {\partial v}{\partial \rho} = -\frac {1}{\rho}\frac {\partial u}{\partial \theta} \; \Longleftrightarrow \; \frac{b_1-b_0}{R-r} = 0$$ 

\noindent Hence, the function is only analytic in the annulus for $c_0=c_1$, and approaches analyticity if $R \to \infty$.

\hfill \qed}

\vspace{3mm}
\noindent One of the crucial basic properties of intact single quadratic map iterations is the existence of an escape radius. Below, we will show that this property also holds for mutated iterations, and that $M = 1+\sqrt{1+c}$, where $c= \lvert c_0 \rvert + |c_1|$, acts as an escape radius for the mutated iteration of $f$, defined in~\eqref{function}. 

First, suppose that $r<M<R$. Then we have the following lemma, reflecting expansion within the transition annulus:

\begin{lemma} If $|z| \geq M$ then $|f_{01}(z)| \geq 2|z|$.\\
\end{lemma}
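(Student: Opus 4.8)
The plan is to bound $|f_{01}(z)|$ from below by discarding the constant (lower-order) contribution via the reverse triangle inequality, and then to check that the resulting quadratic lower bound in $|z|$ overtakes $2|z|$ precisely once $|z|$ passes the threshold $M$. Writing $\rho = |z|$ and recalling from the discussion preceding the lemma that in the transition annulus $f_{01}(z) = z^2 + c_0\frac{R-\rho}{R-r} + c_1\frac{\rho-r}{R-r}$, I would first apply $|a+b| \geq |a|-|b|$ with $a = z^2$ to obtain
\[
|f_{01}(z)| \geq |z|^2 - \left| c_0\frac{R-\rho}{R-r} + c_1\frac{\rho-r}{R-r} \right|.
\]

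The key observation in the second step is that the two weights $\frac{R-\rho}{R-r}$ and $\frac{\rho-r}{R-r}$ are nonnegative and sum to $1$ for $\rho \in [r,R]$, so the bracketed quantity is a convex combination of $c_0$ and $c_1$. Hence its modulus is bounded by $|c_0| + |c_1| = c$, yielding the clean estimate $|f_{01}(z)| \geq |z|^2 - c$. I would emphasize that bounding by $c$ here (rather than by the sharper $\max(|c_0|,|c_1|)$) is exactly what makes the threshold come out to the value of $M$ already fixed above the statement.

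It then remains to show that $|z| \geq M$ forces $|z|^2 - c \geq 2|z|$, i.e. that $g(\rho) := \rho^2 - 2\rho - c \geq 0$. Since $g$ is an upward-opening quadratic in $\rho$ whose positive root is $\rho = 1 + \sqrt{1+c} = M$ (a one-line check confirms $g(M)=0$), the inequality $g(\rho) \geq 0$ holds for every $\rho \geq M$, and chaining this with the bound of the previous paragraph gives $|f_{01}(z)| \geq 2|z|$. There is no genuine obstacle in this argument: the only point needing a moment of care is the convex-combination estimate, together with the observation that the hypothesis $r < M < R$ guarantees that $\rho = |z|$ indeed lands in the annulus on which $f_{01}$ is the operative branch, so that the formula for $f_{01}$ may legitimately be used.
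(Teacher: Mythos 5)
Your argument is correct and follows essentially the same route as the paper: both bound the interpolated constant term by $|c_0|+|c_1|=c$ via the triangle inequality and then verify that the quadratic $|z|^2-2|z|-c$ is nonnegative for $|z|\geq M$ by identifying $M$ as its positive root. Your observation that the weights form a convex combination is a slightly cleaner way to package the same estimate, but it changes nothing substantive.
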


\begin{proof}

\begin{eqnarray*}
|f_{01}(z)| - 2|z| &=& \left |  z^2 + c_0\frac{R-\rho}{R-r} + c_1\frac{\rho -r}{R-r} \right| -2|z| \geq |z|^2 - |c_0|\left|\frac{R-\rho}{R-r}\right| - |c_1|\left|\frac{\rho -r}{R-r}\right|-2|z|\\ \\
	          & \geq& |z|^2 - (|c_0| + |c_1|) -2|z|  = |z|^2 - c - 2|z|  = (|z| - 1)^2 - (1 + c)\\\\
		   &=& \left( |z| - 1 - \sqrt{1 + c} \right) \left( |z| - 1 + \sqrt{1 + c} \right) = \left( |z| - 1 + \sqrt{1 + c} \right)\left( |z| - M \right)
\end{eqnarray*}

\noindent Suppose $|z| \geq M = 1 + \sqrt{1 + c}$. It follows that $|z| \geq 1 - \sqrt{1 + c}$, hence $|f_{01}(z)| - 2|z| \geq 0$, so that $|f_{01}(z)| \geq 2|z|$, which is what we needed to prove.

\end{proof}

\noindent It is easy to show that the expansion condition applies to the functions $f_{c_0}$ and $f_{c_1}$ defined outside the annulus, from the following lemma:

\begin{lemma} If $|z| \geq M$, then $|f_{c_i}(z)| \geq 2|z|$, for $i \in \{ 0,1 \}$.
\end{lemma}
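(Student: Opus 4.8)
The plan is to mirror almost verbatim the completion-of-the-square argument used in the preceding lemma for $f_{01}$, since the maps $f_{c_i}(z) = z^2 + c_i$ are structurally simpler: there is no convex combination of $c_0$ and $c_1$ to bound, only a single constant $c_i$. The whole statement should fall out as a direct corollary of that same estimate.

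First I would apply the triangle inequality to peel off the constant term, writing
$$|f_{c_i}(z)| - 2|z| = |z^2 + c_i| - 2|z| \geq |z|^2 - |c_i| - 2|z|.$$
The only observation needed to connect this to the chosen escape radius $M$ is that $|c_i| \leq |c_0| + |c_1| = c$ for either $i \in \{0,1\}$, so that $|z|^2 - |c_i| - 2|z| \geq |z|^2 - c - 2|z|$. This is the step that makes a single $M$ work uniformly for both maps.

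Next I would complete the square exactly as before:
$$|z|^2 - c - 2|z| = (|z| - 1)^2 - (1 + c) = \left( |z| - 1 - \sqrt{1+c} \right)\left( |z| - 1 + \sqrt{1+c} \right) = (|z| - M)\left( |z| - 1 + \sqrt{1+c} \right).$$
When $|z| \geq M = 1 + \sqrt{1+c}$ the first factor is nonnegative, and since $M > 1 - \sqrt{1+c}$ the second factor is strictly positive; hence the product is nonnegative and $|f_{c_i}(z)| \geq 2|z|$, as claimed.

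There is really no hard step here — the estimate is the one already carried out for the transition function, obtained simply by replacing the interpolated constant with the single value $c_i$ and invoking $|c_i| \leq c$. The only thing to be careful about is to state the bound uniformly in $i$, so that the one escape radius $M$ serves both $f_{c_0}$ and $f_{c_1}$ simultaneously; this is precisely what the definition $c = |c_0| + |c_1|$ guarantees. Combined with the preceding lemma on $f_{01}$, this establishes that $|f(z)| \geq 2|z|$ for every $|z| \geq M$ under the full piecewise map $f$, which is exactly what makes $M$ a genuine escape radius for the mutated iteration.
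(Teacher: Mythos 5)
Your proof is correct and follows essentially the same route as the paper's: triangle inequality to peel off $c_i$, the bound $|c_i|\le |c_0|+|c_1|=c$ to reduce to the single estimate $|z|^2-2|z|-c$, then completing the square and factoring against $M=1+\sqrt{1+c}$. If anything, your write-up is cleaner than the paper's (which contains a couple of typographical slips in the displayed chain of inequalities), so no changes are needed.
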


\begin{proof}
For $i=0,1$ we have:
\begin{eqnarray*}
|f_{c_i}(z)| -2|z|&=& |z^2+c_i| -2|z| \geq |z|^2 - |c_i| -2|z| >|z|^2-2|z| -c\\
&=& (|z|-1)^2 - (c+1) = (|z|-M)(|z|-1+\sqrt{c+1}) >0 \text{ for } |z| \geq M
\end{eqnarray*}

\noindent Hence $|f_{c_i}(z)| \geq 2|z|$, for $|z| \geq M$ and $i=0,1$.
\end{proof}

\noindent Taken together, the two lemmas lead to the following:
\begin{thm} The positive constant $M=1+\sqrt{1+ |c_0|+|c_1|}$ is an escape radius for the iteration defined by~\eqref{function}. More precisely, if $|z| \geq M$, then $|f(z)| \geq 2|z|$; thus $z_n =  f^{\circ n}(z_0)$ escapes for any $|z_0| \geq M$.
\end{thm}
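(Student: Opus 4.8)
The plan is to assemble the statement directly from the three preceding lemmas by a case analysis on which region of the domain the point $z$ occupies, and then promote the resulting one-step expansion estimate $|f(z)| \geq 2|z|$ to divergence of the full orbit by a routine induction.

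First I would fix $z$ with $|z| \geq M$ and identify which branch of the piecewise definition~\eqref{function} governs $f(z)$. Under the standing assumption $r < M < R$, the inequality $|z| \geq M > r$ rules out the inner disc, so only two cases survive: if $M \leq |z| < R$ then $f(z) = f_{01}(z)$ and the annulus lemma yields $|f(z)| \geq 2|z|$, whereas if $|z| \geq R$ then $f(z) = f_{c_1}(z)$ and the outer-function lemma yields $|f(z)| \geq 2|z|$. (If one wished to drop the hypothesis $r < M < R$, the additional case $|z| \leq r$ would be covered identically by the $f_{c_0}$ estimate, so the expansion bound in fact holds on all of $\{\, |z| \geq M \,\}$ regardless.) In every case one arrives at the single inequality $|f(z)| \geq 2|z|$ valid whenever $|z| \geq M$, which is the first assertion of the theorem.

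Next I would turn this pointwise bound into escape to infinity. The region $\{\, |z| \geq M \,\}$ is forward invariant: if $|z_0| \geq M$ then $|z_1| = |f(z_0)| \geq 2|z_0| \geq 2M > M$, so $z_1$ again lies where the expansion estimate applies. An immediate induction then gives $|z_n| = |f^{\circ n}(z_0)| \geq 2^n |z_0| \geq 2^n M$, and since $2^n M \to \infty$ we conclude $z_n \to \infty$, as claimed.

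I do not anticipate a genuine obstacle, because the three lemmas already supply the expansion estimate on each individual branch; the theorem is essentially their merger. The only points demanding care are bookkeeping: verifying that the case split exhausts $\{\, |z| \geq M \,\}$ for the prescribed ordering of $r$, $M$, $R$ (this is precisely what the hypothesis $r < M < R$ buys), and checking the forward-invariance step $2M > M$ that licenses iterating the bound. Both are immediate, so the substantive content of the result lives in the preceding lemmas rather than in their combination.
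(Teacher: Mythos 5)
Your proposal is correct and matches the paper's intended argument: the paper gives no separate proof of the theorem, stating only that the preceding lemmas ``taken together'' yield it, and your case split over the branches of~\eqref{function} plus the forward-invariance induction $|z_n| \geq 2^n |z_0|$ supplies exactly the routine details that were left implicit. Nothing further is needed.
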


\begin{figure}[h!]
\begin{center}
\includegraphics[width=0.7\textwidth]{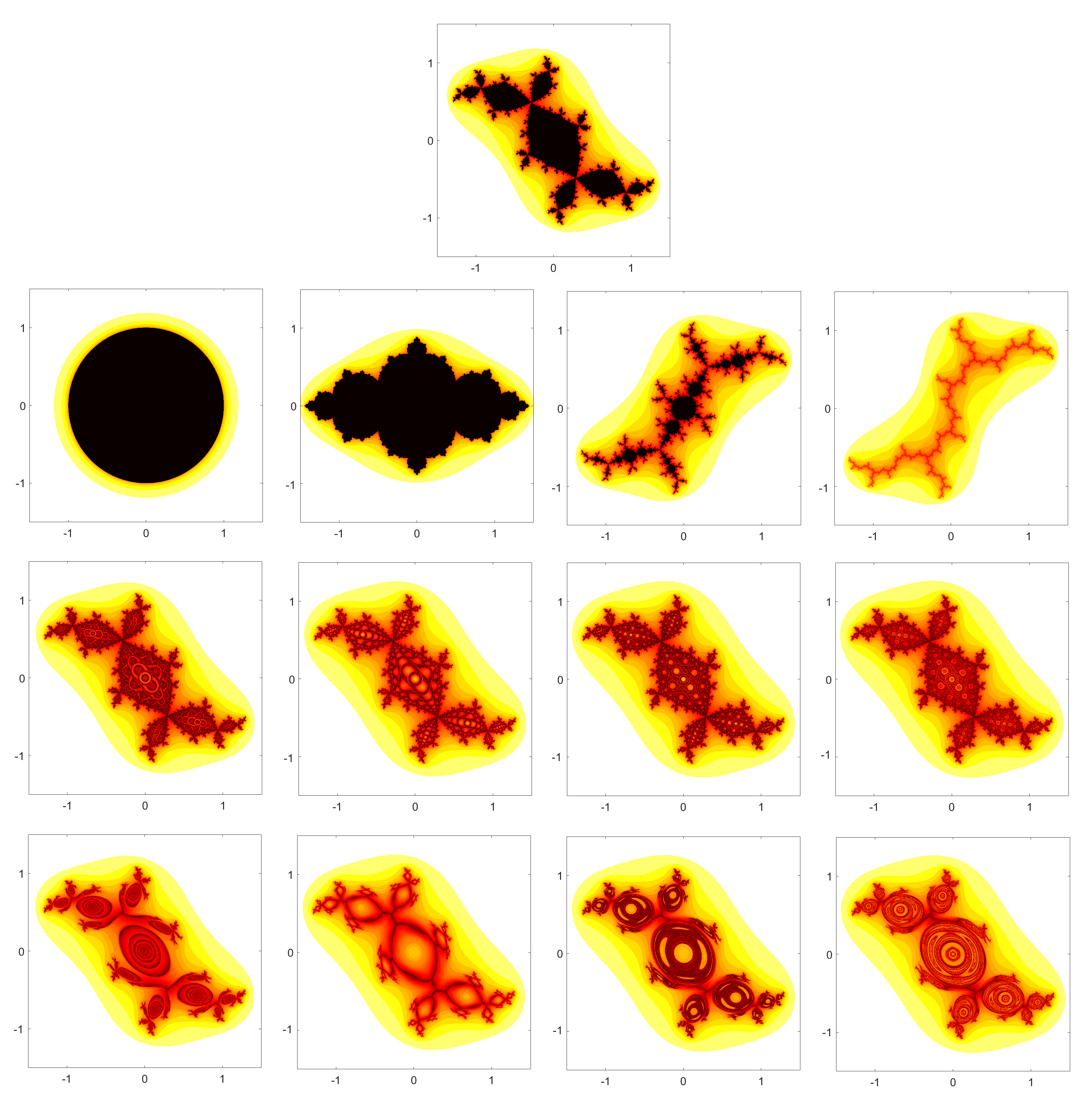}
\end{center}
\caption{\small \emph{{\bf Example of prisoner sets for different mutations.} The top panel illustrates the prisoner set of the intact map $f_{c_1}$, for $c_1=-0.13+0.77i$ (the Douady rabbit). The next row represents the prisoner sets for four different functions $f_{c_0}$, from left to right: $c_0=0$ (unit disk); $c_0=-0.65$ (basilica); $c_0=-0.117-0.856i$; $c_0=-i$ (dendrite). The two additional panels in each column illustrate the prisoner set for a system with a point-wise mutation $c_0$ at the origin, and transition radius $R=0.1$ (third row) and $R=0.5$ (fourth row). }}
\label{parameter_cases}
\end{figure}

\section{Dependence on mutation radii}
\label{results}

\noindent In the next sections, we will investigate numerically and analytically the effects of various key parameters on the asymptotic dynamics of the replication system, as reflected by the geometric properties of its prisoner set. For example, Figure~\ref{parameter_cases} illustrates the prisoner sets for four systems in which different point-wise mutations (with $r=0$) were inserted at the origin into the same intact replication system $c_1=-0.13+0.77i$ (the Douady rabbit), for two different values of the transition radius $R$. The figure suggests that the shape of the prisoner set varies widely based not only on the parameter pair $(c_0,c_1)$, but also on the size of the two radii $0<r \leq R$. It is easy to see that ${\cal P}(f)$ approaches ${\cal P}(f_{c_1})$ when $R \searrow 0$, and approaches ${\cal P}(f_{c_0})$ when $r \nearrow \infty$. However, for intermediate values $0<r<R<\infty$, the dependence is quite complex. Below, we point out some tractable features in the evolution of ${\cal P}(f)$ for the case of when the transition radius $R$ is small and $r \nearrow R$, then for the case of a point-wise mutation at the origin ($r=0$) and $R \nearrow \infty$.

\subsection{Small transition radius}
\label{small}

\noindent Figure~\ref{parameter_cases} shows a few examples of how pointwise mutations with different transition radii $R$ affect the asymptotic dynamics of an intact map (with prisoner set the Douady rabbit). The third row (for smaller $R$) suggests that if the mutation, including its transient region, is not too extensive (i.e., $R$ is small), then the prisoner set for the mutated iteration remains a subset of the prisoner set obtained for the intact map. A sufficient condition expressing how small $R$ needs to be for this to occur depends on the geometry of the intact prisoner set (i.e., on the system's behavior in the absence of the mutation). More precisely, if $\text{D}(R)$ is the disc centered at the origin with radius $R$, we have the following:

\begin{lemma}
If the radius $R$ is such that $\text{D}(R) \subseteq {\cal P}(f_{c_1})$, then ${\cal P}(f) \subseteq {\cal P}(f_{c_1})$.
\label{inclusion_lemma}
\end{lemma}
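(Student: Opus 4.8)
The plan is to prove the contrapositive: I will show that if $z_0 \notin {\cal P}(f_{c_1})$, then $z_0 \notin {\cal P}(f)$. The single structural fact driving everything is that $f$ and $f_{c_1}$ agree on the region $\{|z| \geq R\}$ (the third branch of Eq.~\eqref{function}), while the hypothesis $D(R) \subseteq {\cal P}(f_{c_1})$ guarantees that the mutated region $\{|z| < R\}$, where $f$ may differ from $f_{c_1}$, lies entirely inside the intact prisoner set. So the only way an $f$-orbit can ``feel'' the mutation is by entering $D(R)$, and I will argue that an orbit which escapes under $f_{c_1}$ can never do so.

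First I would record the elementary observation that the escape set $\mathbb{C}\setminus{\cal P}(f_{c_1})$ is forward-invariant under $f_{c_1}$: the orbit of $f_{c_1}(w)$ is the orbit of $w$ with its first point deleted, so one is unbounded iff the other is. Consequently, if $z_0 \notin {\cal P}(f_{c_1})$, then every iterate $w_m := f_{c_1}^{\circ m}(z_0)$ also lies outside ${\cal P}(f_{c_1})$, and since $D(R) \subseteq {\cal P}(f_{c_1})$, none of the $w_m$ can lie in $D(R)$. In particular $|w_m| \geq R$ for all $m$ (and $z_0 \notin D(R)$ to begin with, since $z_0$ escapes).

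The second step is a short induction comparing the two orbits. Writing $z_n := f^{\circ n}(z_0)$, I claim $z_n = w_n$ for all $n$. This holds at $n = 0$; and if $z_n = w_n$ with $|w_n| \geq R$, then $f(z_n) = f_{c_1}(z_n) = f_{c_1}(w_n) = w_{n+1}$ because $f$ coincides with $f_{c_1}$ wherever $|z| \geq R$, and $|w_{n+1}| \geq R$ by the previous step. Thus the $f$-orbit of $z_0$ equals its $f_{c_1}$-orbit, which is unbounded, so $z_0 \notin {\cal P}(f)$, completing the contrapositive and hence the inclusion.

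The only genuinely non-routine ingredient is the claim that an escaping $f_{c_1}$-orbit avoids $D(R)$; everything else is bookkeeping. That claim is exactly where the hypothesis $D(R)\subseteq {\cal P}(f_{c_1})$ is used, together with forward-invariance of the escape set — without the hypothesis the orbit could dip into the mutated disc, be deflected onto a bounded trajectory, and the conclusion would fail. I would also note that the convention for the boundary circle $|z| = R$ is immaterial: there $f = f_{c_1}$ by the third branch of Eq.~\eqref{function}, and the complement of $D(R)$ is contained in $\{|z| \geq R\}$ regardless of whether $D(R)$ is taken open or closed, so the induction step is unaffected.
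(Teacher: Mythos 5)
Your proof is correct and is essentially the contrapositive of the paper's own argument: both hinge on the same two facts, namely that $f$ agrees with $f_{c_1}$ outside $D(R)$ and that any $f_{c_1}$-orbit point landing in $D(R) \subseteq {\cal P}(f_{c_1})$ forces boundedness, so an escaping orbit never enters the mutated region and the two orbits coincide. If anything, your version is slightly more careful than the paper's (you treat the initial point $z_0$ and the boundary circle $|z|=R$ explicitly), but the underlying idea is the same.
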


\proof{Suppose $z_0 \in {\cal P}(f)$. Call $z_n = f_{c_1}^{\circ n}(z_0)$, for all $n \geq 1$. If there exists an $N \geq 0$ such that $| z_N | < R$, then $z_N \in {\cal P}(f_{c_1})$, and the forward orbit of $z_N$ under $f_{c_1}$ is bounded, hence the sequence $(z_n)_{n \geq 1}$ is bounded. If $| z_n | > R$ for all $n \geq 1$, then $f^{\circ n}_{c_1}(z_0) = f^{\circ n}(z_0)$ for all $n \geq 1$. Since the latter is bounded, it follows that $(z_n)_{n \geq 1}$ is also bounded. Either way, $z_0 \in {\cal P}(f_{c_1})$.

\qed }\\

\begin{figure}[h!]
\begin{center}
\includegraphics[width=0.7\textwidth]{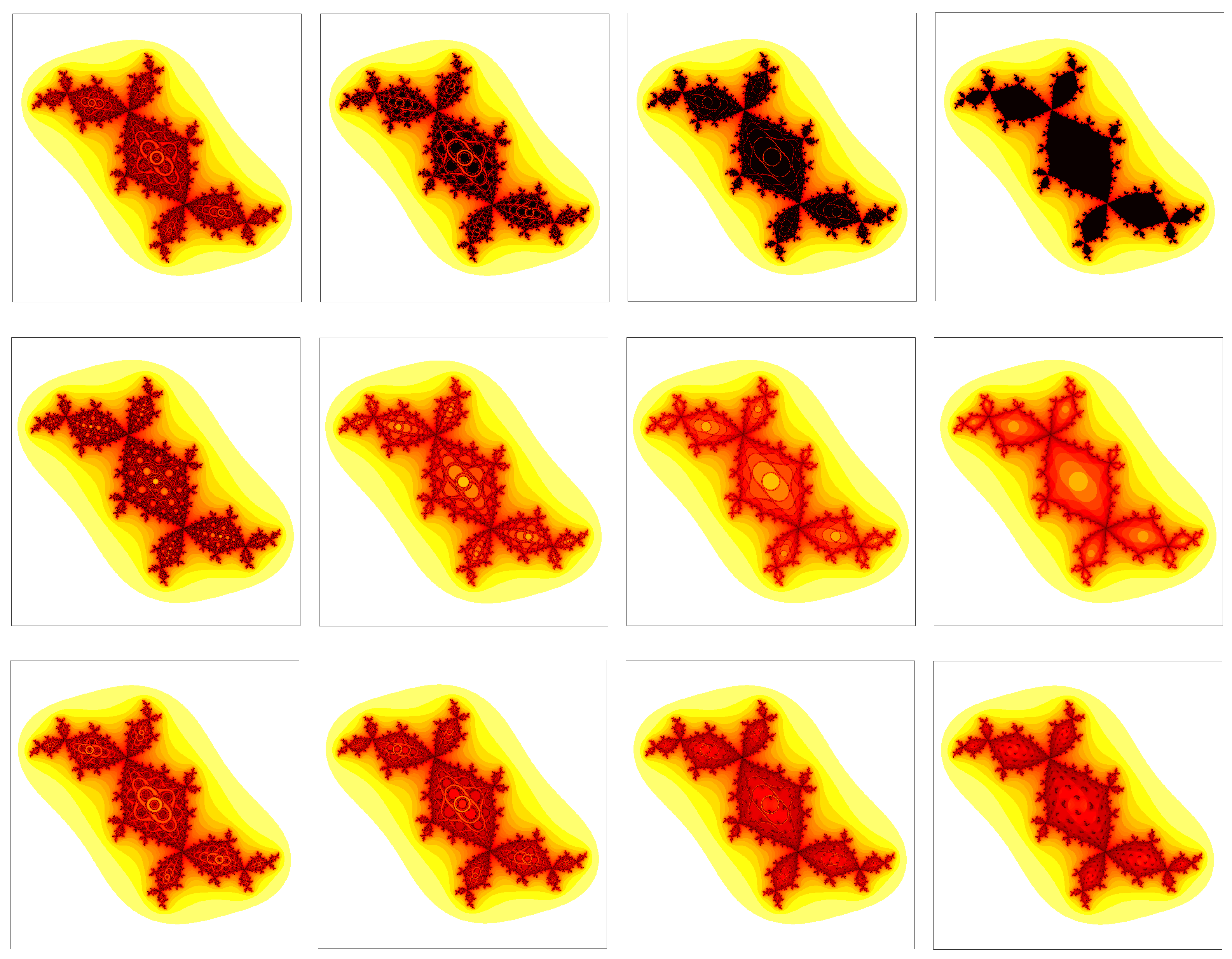}
\end{center}
\vspace{-4mm}
\caption{\small \emph{{\bf Dependence of prisoner set on the mutation radius $r$ when $D(R) \subseteq {\cal P}(f_{c_1})$.} All panels represent mutations with fixed transition radius $R=0.1$, acting at the origin on $c_1= -0.13+0.77i$. From top to bottom, each row corresponds to a different mutant $c_0$, as follows: {\bf Top:} $c_0=0$. {\bf Middle:} $c_0=-0.13-0.77i$. {\bf Botom:} $c_0=0.33$. The mutation radius is increased from left to right, so that each column corresponds respectively to: $r=0$; $r=0.04$; $r=0.08$; $r=0.1=R$.}}
\label{increasing_r}
\end{figure}

\noindent Notice that with the smallness restriction for $R$ which ensures that ${\cal P}(f) \subseteq {\cal P}(f_{c_1})$, the prisoner set of the mutated system ${\cal P}(f)$ may still be connected or break into multiple connected components, the number, size and geometry of which depend on both parameters $(c_0,c_1)$ and radii $(r,R)$. What happens more specifically along the path of increasing $r$ from zero to $R$ depends on the context. 

For example, the top row in Figure~\ref{increasing_r} illustrates the situation corresponding to the mutation $c_0=0$ (the unit disc) applied at the origin to the Douady rabbit $c_1= -0.13+0.77i$, as $r$ increases from zero to the fixed transition radius $R=0.1$. In this case, we have that ${\cal P}(f)$ approaches ${\cal P}(f_{c_1})$ as $r \nearrow R$, in the following sense: for all $z \in {\cal P}(f_{c_1})$, there exists $\delta > 0$ such that $z \in {\cal P}(f)$ for all $r>R-\delta$, where $f$ is the mutated map corresponding to the mutation radius $r$. This is in fact more generally the case for any $c_1$ that represents the center of a hyperbolic component of the Mandelbrot set, and transition radius $R$ such that $D(R) \subset {\cal P}(f_{c_1})$. Indeed, notice first that, since the Julia set of $f_{c_1}$ is invariant under iterations of $f_{c_1}$, any $z \in {\cal J}(f_{c_1}) = \partial {\cal P}(f_{c_1})$ iterates back to ${\cal J}(f_{c_1}) \subset \mathbb{C} \setminus D(R)$, hence will have the same bounded trajectory under both $f_{c_1}$ and $f$. Consider now $z=z_0$ in the interior $\overset{\circ}{\cal P}(f_{c_1})$. Since all interior prisoners are attracted to the superstable critical orbit, there is a minimal $N \geq 0$ such that $f^{\circ N}(z_0) \in D(R)$. The minimality condition implies that $\lvert f^{\circ k}(z_0) \rvert \geq R$, hence $f^{\circ k}(z_0) = f_{c_1}^{\circ k}(z_0)$, for all $k<N$ (in case $N>0$). Now, take $\delta < R-\lvert f^{\circ N}(z_0) \rvert$. Then, for $r > R-\delta$, we have that $\lvert f^{\circ N}(z_0) \rvert < r$, hence $\lvert f^{\circ n}(z_0) \rvert < r$ for all $n \geq N$ (since the map $f$ acts as $z \to z^2$ inside $D(r)$, as illustrated in Figure~\ref{proof}a). This concludes that $z=z_0 \in {\cal P}(f)$, for $f$ constructed with the given mutation radius $r$. Notice that the condition for $r$ depends on the choice of $\delta$, hence on the point $z$.

\begin{figure}[h!]
\begin{center}
\includegraphics[width=0.75\textwidth]{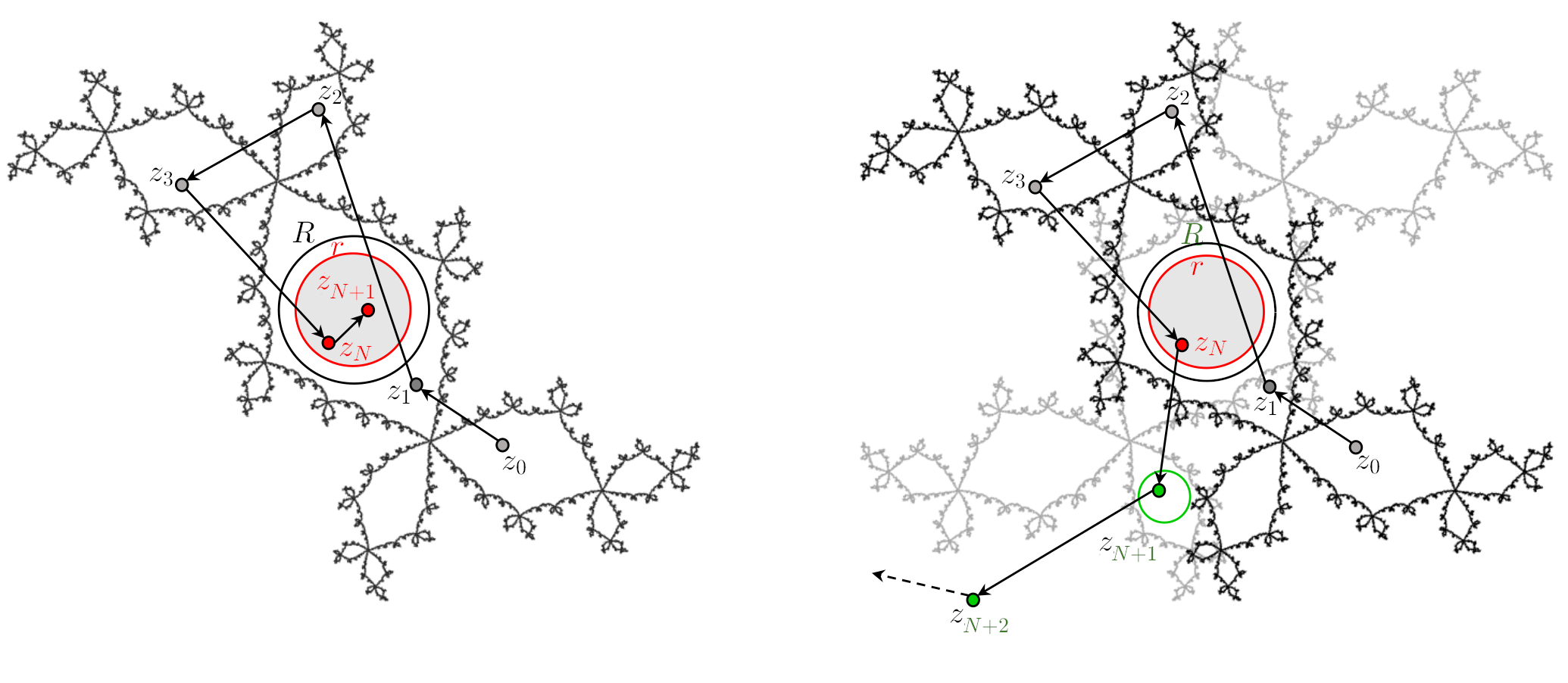}
\end{center}
\vspace{-4mm}
\caption{\small \emph{{\bf Examples of limit behavior of ${\cal P}(f)$ as $r \nearrow R$.} {\bf Left.} $c_1= -0.13+0.77i$, $c_0=0$. The transition radius $R$ is represented by the black circle, and is taken such that the disk $D(R)$ is a subset of the Douady rabbit ${\cal P}(f_{c_1})$ (shown as a contour). The orbit of an arbitrary point $z_0$ in $\overset{\circ}{\cal P}(f_{c_1})$, is sketched symbolically up to $z_N = f^{\circ N}_{c_1}(z_0)$, its first iterate that falls in the interior of $D(R)$. Then the mutation radius (represented by the red circle) can be chosen so that $\lvert z_N \rvert < r <R$, and $z_N \in D(r)$. The rest of the trajectory $z_{n}=f^{\circ n}(z_0)$, $n \geq N+1$ is then iterated under $f_{c_0}(z) = z^2$, and is therefore trapped in $D(r)$. {\bf Right.}  $c_1= -0.13+0.77i$, $c_0=-0.13-0.77i$. The contour of the Douady rabbit $c_1$ is shown in black, and that of the inverted rabbit $c_0$ is shown in gray. The transition radius $R$ is represented by the black circle, and is small enough so that $D(R) \subset {\cal P}(f_{c_1})$, and so that the first iterate of $D(R)$ (contained inside the green circle) is outside of  ${\cal P}(f_{c_1})$, such that $f_{c_1}(D(R)) \cap {\cal P}(f_{c_0}) = \phi$. The orbit of an arbitrary $z_0 \in \overset{\circ}{\cal P}(f_{c_1})$ is shown symbolically up to its first iterate $z_N \in D(R)$. Then the mutation radius $r$ (red circle) can be chosen such that $\lvert z_N \rvert < r <R$, and $z_{N+1} = f(z_N)$ is outside of both $D(R)$ and ${\cal P}(f_{c_1})$, hence it escapes under $f$.}}
\label{proof}
\end{figure}

On the other hand, the middle row in Figure~\ref{increasing_r} shows how the mutation $c_0=\overline{c_1}$ leads to a different outcome when applied to the same intact function $c_1= -0.13+0.77i$, as $r$ increases towards the same transition radius $R=0.1$. In this case, ${\cal P}(f)$ approaches the Julia set ${\cal J}(f_{c_1}) = \partial {\cal P}(f_{c_1})$ as $r \nearrow R$. This is true because $R$ is small enough so that its first image under the mutated map $f_{c_0}$ does not intersect the prisoner set of the intact map $f_{c_1}$, that is: $f_{c_1}(D(R)) \subset {\cal P}(f_{c_1}) \setminus {\cal P}(f_{c_0})$ (see Figure~\ref{proof}b). Since the Julia set ${\cal J}(f_{c_1}) \subset \mathbb{C} \setminus D(R)$ is invariant under $f_{c_1}$, it follows that it is also invariant under $f$, hence ${\cal J}(f_{c_1}) \subset {\cal P}(f)$. As before, for any interior point $z = z_0 \in \overset{\circ}{\cal P}(f_{c_1})$, there is a first iterate $z_N = f_{c_1}^{\circ N}(z_0) \in D(R)$. Taking $\delta < R- \lvert z_N \rvert$, and $r > R-\delta$, it follows that $z_N \in D(r)$, hence $f(z_N) = f_{c_0}(z_N) \subset f_{c_0}(D(R)) \subset \mathbb{C} \setminus {\cal P}(f_{c_1})$, and $z_N \notin D(R)$, which means that $z=z_0$ will escape under both $f_{c_1}$ and $f$. Hence every interior prisoner of ${\cal P}(f_{c_1})$ will eventually be excluded from ${\cal P}(f)$, as $r \nearrow R$. 

The bottom row of Figure~\ref{increasing_r} illustrates the case where the mutation $c_0=0.33$ is introduced at the origin into the Douady rabbit $c_1= -0.13+0.77i$, for the same fixed transition radius $R=0.1$. The panels show that ${\cal P}(f)$ approaches a proper subset of ${\cal P}(f_{c_1})$ which contains interior points of ${\cal P}(f_{c_1})$, as $r \nearrow R$.

\noindent The situation changes qualitatively when the radius of the transient annulus is larger than prescribed in the lemma, as shown in the bottom row of Figure~\ref{parameter_cases}, and additionally illustrated in Figure~\ref{increasing_r(b)}.  If the smallness condition for $R$ in Lemma~\ref{inclusion_lemma} no longer applies, then ${\cal P}(f)$ can no longer be expected to remain a subset of ${\cal P}(f_{c_1})$. In fact, we conjecture that the opposite is true, under general assumptions for the maps and for the two radii (see also Figure~\ref{increasing_r(b)_sketch}):

\begin{figure}[h!]
\begin{center}
\includegraphics[width=0.75\textwidth]{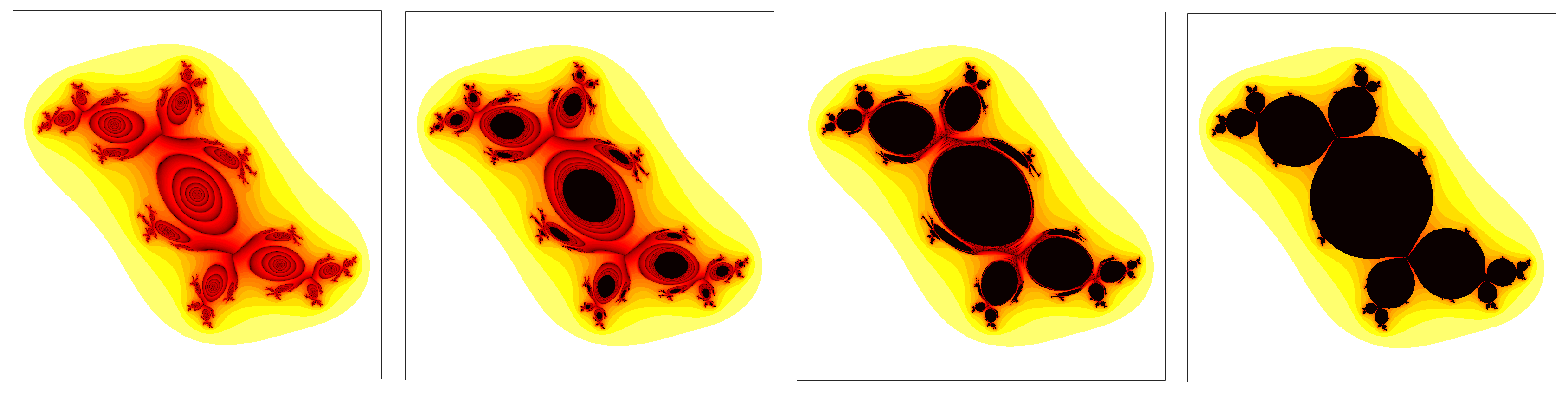}
\end{center}
\vspace{-4mm}
\caption{\small \emph{{\bf Dependence of prisoner set on the mutation radius $r$ when $D(R) \setminus {\cal P}(f_{c_1}) \neq \phi$.} All panels represent the mutation $c_0=0$ acting at the origin on $c_1= -0.13+0.77i$, with fixed transition radius $R=0.5$. The mutation radius is increased in panels (I) to (IV) as follows: $r=0$; $r=0.1$; $r=0.3$; $r=0.5$.}}
\label{increasing_r(b)}
\end{figure}

\begin{conj}
If the radius $R$ is such that ${\cal P}(f) \subseteq {\cal P}(f_{c_1})$, then $\text{D}(R) \subseteq {\cal P}(f_{c_1})$.
\end{conj}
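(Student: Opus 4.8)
The plan is to prove the contrapositive: assuming $\text{D}(R) \not\subseteq {\cal P}(f_{c_1})$, I would exhibit a point that is trapped under the mutated map $f$ yet escapes under $f_{c_1}$, which gives ${\cal P}(f) \not\subseteq {\cal P}(f_{c_1})$. By hypothesis the open escaping set $E(f_{c_1}) = \mathbb{C}\setminus{\cal P}(f_{c_1})$ meets the open disc $\text{D}(R)$, so I may fix a point $w$ with $|w| < R$ whose $f_{c_1}$-orbit diverges. The whole problem then reduces to locating a point of $E(f_{c_1})$ that also lies in ${\cal P}(f)$, i.e. to showing $E(f_{c_1}) \cap {\cal P}(f) \neq \emptyset$. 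The guiding observation is that any $w \in E(f_{c_1})$ whose $f$-orbit stays forever inside $\text{D}(R)$, or returns to it infinitely often without escaping, has a bounded $f$-orbit, and hence already witnesses $w \in {\cal P}(f)\setminus{\cal P}(f_{c_1})$.

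First I would dispose of the case in which the escaping point can be taken inside the inner disc, $|w| < r$. Here I would invoke the hypothesis that the mutation parameters and radii are chosen so that $\text{D}(r)$ is itself trapped under $f$ — for instance $\overline{\text{D}(r)}$ contained in the interior of ${\cal P}(f_{c_0})$ and forward invariant under $f_{c_0}$ — which forces $\text{D}(r) \subseteq {\cal P}(f)$, since $f = f_{c_0}$ on $\text{D}(r)$. Under such an assumption any $w \in \text{D}(r) \cap E(f_{c_1})$ lies in ${\cal P}(f)\setminus{\cal P}(f_{c_1})$ and we are finished. Thus the crux is to rule out the alternative in which every escaping-under-$f_{c_1}$ point of $\text{D}(R)$ is eventually ejected from $\text{D}(R)$ by $f$ and thereafter escapes.

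The remaining, and hardest, case is when the escaping points of $\text{D}(R)$ live only in the transition annulus $r < |z| < R$, where $f = f_{01}$ genuinely differs from $f_{c_1}$. The clean comparison used in the forward direction — that $f$ and $f_{c_1}$ share orbits as long as they remain outside $\text{D}(R)$ — is unavailable, because the very first application of $f$ already perturbs the orbit of $w$. My plan would be to exploit the continuity lemma together with the connectedness of the basin of infinity $E(f_{c_1})$: being open, connected, and containing a neighborhood of $\infty$ while meeting $\text{D}(R)$, this set must cross the circle $|z| = R$. I would then try to track the $f$-image of the portion of $E(f_{c_1})$ caught inside the annulus and argue, using the expansion estimate (escape is forced only beyond radius $M$) and the exact agreement $f = f_{c_1}$ on $|z| \geq R$, that $f_{01}$ cannot map this entire open set into $E(f)$; a point whose $f$-orbit is recaptured by ${\cal P}(f_{c_0})$ or otherwise confined would then supply the desired element of $E(f_{c_1}) \cap {\cal P}(f)$.

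I expect this annulus case to be the main obstacle, precisely because the hybrid dynamics of $f_{01}$ neither coincide with those of $f_{c_1}$ nor are confined by any obvious invariant region, so no direct orbit comparison is available. I also anticipate that the statement genuinely requires the ``general assumptions'' the authors invoke: at a minimum $c_0 \neq c_1$ — when $c_0 = c_1$ the map is unmutated, yet $\text{D}(R) \not\subseteq {\cal P}(f_{c_1})$ can hold for large $R$, contradicting the conclusion — together with quantitative control relating $R$ to the escape radius $M$ and to the geometry of ${\cal P}(f_{c_0})$. Pinning down the sharp hypotheses that make the annulus argument go through is, I believe, exactly why the result is posed here as a conjecture rather than a theorem.
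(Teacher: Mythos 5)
The statement you were asked to prove is presented in the paper as a conjecture: the authors give no proof, and they explicitly hedge it with the phrase ``under general assumptions for the maps and for the two radii,'' so there is no argument of record to compare yours against. Your writeup is honest that it is a plan rather than a proof, and your structural choice (contrapositive: from $\text{D}(R)\not\subseteq{\cal P}(f_{c_1})$ produce a point of ${\cal P}(f)\setminus{\cal P}(f_{c_1})$) is the natural one. Your observation that the statement is false as literally written when $c_0=c_1$ is genuinely useful; it pins down one of the unstated ``general assumptions'' the authors must intend.

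That said, both halves of your case analysis have real gaps. In the inner-disc case you assume that $\overline{\text{D}(r)}$ sits in the interior of ${\cal P}(f_{c_0})$ and is forward invariant under $f_{c_0}$; this is an additional hypothesis not present in the conjecture, and it fails outright for many of the mutations the paper actually studies --- for $c_0=i$ the prisoner set is a dendrite with empty interior, and for $c_0$ outside the Mandelbrot set it is a Cantor set --- and it is vacuous whenever $r=0$, which is the pointwise-mutation regime dominating the paper's figures. More importantly, the case you yourself identify as the crux --- an escaping point of $f_{c_1}$ available only in the transition annulus --- is not resolved: the claim that ``$f_{01}$ cannot map this entire open set into $E(f)$'' is asserted, not argued, and it is exactly the kind of statement that would require control over the non-analytic hybrid dynamics of $f_{01}$ (which the paper's own lemmas show is not holomorphic, so none of the standard machinery of complex dynamics applies there). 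Your proposal is a reasonable map of where the difficulty lies, but it does not constitute a proof, and the statement remains an open conjecture.
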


\begin{figure}[h!]
\begin{center}
\includegraphics[width=0.9\textwidth]{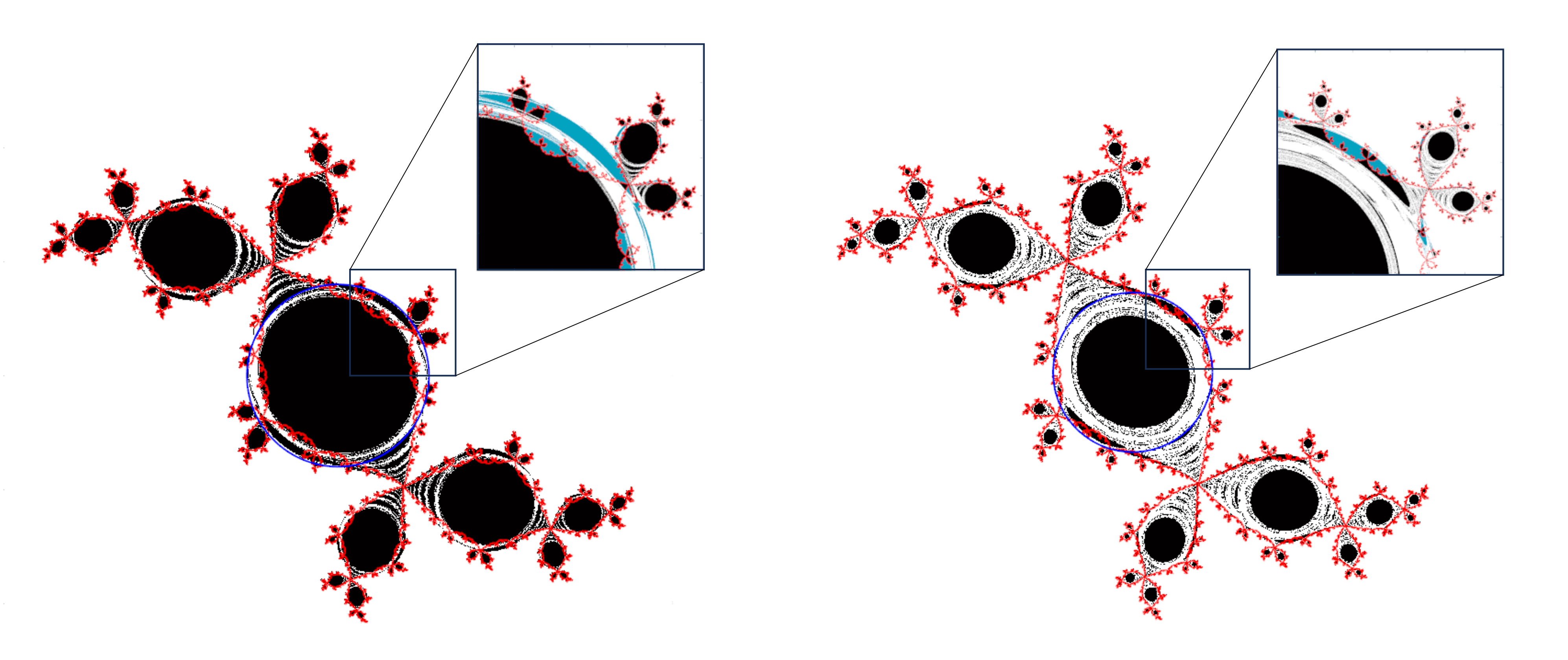}
\end{center}
\vspace{-4mm}
\caption{\small \emph{{\bf Illustration of prisoner sets ${\cal P}(f)$} that are not subsets of ${\cal P}(f_{c_1})$, when $D(R)$ is not a subset of ${\cal P}(f_{c_1})$. Both panels show the prisoner set for a mutation $c_0=0$ at the origin, inserted into the intact map $c_1= -0.13+0.77i$ (Douady rabbit), with mutation radius $r=0.3$ and transition radius $R=0.4$ (left) and with mutation radius $r=0.2$ and transition radius $R=0.35$ (right). The Julia set for the intact map $c_1$ is also shown in red, and the circle bounding $D(R)$ is shown in dark blue in each case. In each panel, a small square region is magnified in the respective insert, to better illustrate a subset pf points in ${\cal P}(f) \setminus {\cal P}(f_{c_1})$ (showed as a shaded blue region).}}
\label{increasing_r(b)_sketch}
\end{figure}

A mutation with a small locus $R$ can be interpreted as corresponding to an error in replication which only affects a small number of genes. More precisely, for a permanent mutation as the ones we are considering in our model, some genes in the small locus $D(R)$ which should be expressed in the intact replication are dropped by the perturbed replicator, and/or conversely, some wrong genes may be retained in the expression at certain time steps. Interestingly, what our results suggest is that, as long as the mutation affects a small enough locus, the end result will be that of missing some of the intact gene expression in the terminal differentiation. This may explain, for example, certain loss of function in the resulting cells, such as the reduced neurogenesis documented in~\cite{puigdevall2023somatic}. On the other hand, a mutation with larger radius $R$ will manifest itself with both omission and addition of inaccurate gene expression to the differentiation locus. This may be the case of oncogenic mutations (associated in some empirical studies with the presence of ``de-differentiation'' stages ~\cite{carvalho2020cell}), which may manifest as exacerbation of certain functions, such as abnormal cell proliferation.


\subsection{Large transition radius} 
\label{large}

A mutation with very large transition radius $R$ corresponds to an iterated map where the mutation $f_{c_0}$ and its ripple effects $f_{01}$ act on most of the DNA expression. It would be logical for the gene expression in the terminally differentiated cell under the mutation to be more similar to that of a cell obtained by iterations of the erroneous replicator $f_{c_0}$ than of the intact replicator $f_{c_1}$ (the action domain of which is pushed towards infinity, thus becoming virtually irrelevant). This may be, for example, what occurs in the differentiation process of epithelial cells in acute burn injury. Empirical studies revealed increased terminal differentiation of epithelium cells called keratinocytes in non-burned skin allografts, while cells from burn injury subjects showed significantly higher proliferative profile and genetic expression enhancing the physiologic wound healing process~\cite{gauglitz2012functional}. This suggests that, for the duration of the healing, a different genetic differentiation algorithm may take over a relatively large RNA expression locus, virtually leading to a different type of terminally differentiated cells under the changed environment~\cite{evans2013epithelial}.

It is then interesting to formalize mathematically in which way the prisoner set of the mutated system ${\cal P}(f)$ approaches ${\cal P}(f_{c_0})$, as the mutation radius increases. In Figure~\ref{increasing_R(a)}, we illustrate the evolution of the prisoner set of the iteration, as the transition radius $R$ increases from $R=0$ to $R \to \infty$, for the point-wise mutation: $c_0=0$. In Figure~\ref{increasing_R(b)}, we show this evolution for one additional point-wise mutation: $c_0=-0.117-0.856i$. There are similarities and differences between the two pathways. For any combination of maps, the prisoner set is originally (i.e., for $r=R=0$) identical almost everywhere with the prisoner set of the intact map $f_{c_1}$ (the combinatorics of the iteration are identical with those under $f_{c_1}$, except on the critical orbit of $f_{c_0}$, which is at most countable). Increasing $R$ from zero puts ${\cal P}(f)$ through a sequence of topological transformations, eventually leading towards the prisoner set of the mutation map $f_{c_0}$. Below, we prove a convergence result that formalizes in which sense ${\cal P}(f)$ approaches ${\cal P}(f_{c_0})$ as $R$ gets large. 

\begin{thm} Fix an arbitrary pair $(c_0,c_1) \in \mathbb{C}^2$ and an $N \in \mathbb{N}$. For any $z_0 \in \mathbb{C}$ and for any $\varepsilon>0$, there exists $K=K(z_0,\varepsilon)>0$ such that, if $R>K$, then $\lvert f^{\circ n}(z_0) - f^{\circ n}_{c_0}(z_0) \rvert < \varepsilon$ for all $n \leq N$, where $f$ is the mutated map corresponding to a point-wise mutation $f_{c_0}$ applied at the origin and with transition radius $R$.
\label{convergence_theorem}
\end{thm}

\noindent {\bf Remark.} The theorem shows that, for any pair of intact map and mutation, one can control the distance between the trajectory of any point under the mutated map $f$ and the mutation itself $f_{c_0}$ over a given (finite, but possibly large) number $N$ of iterates, by allowing $R$ to be sufficiently large. 

\begin{figure}[h!]
\begin{center}
\includegraphics[width=0.7\textwidth]{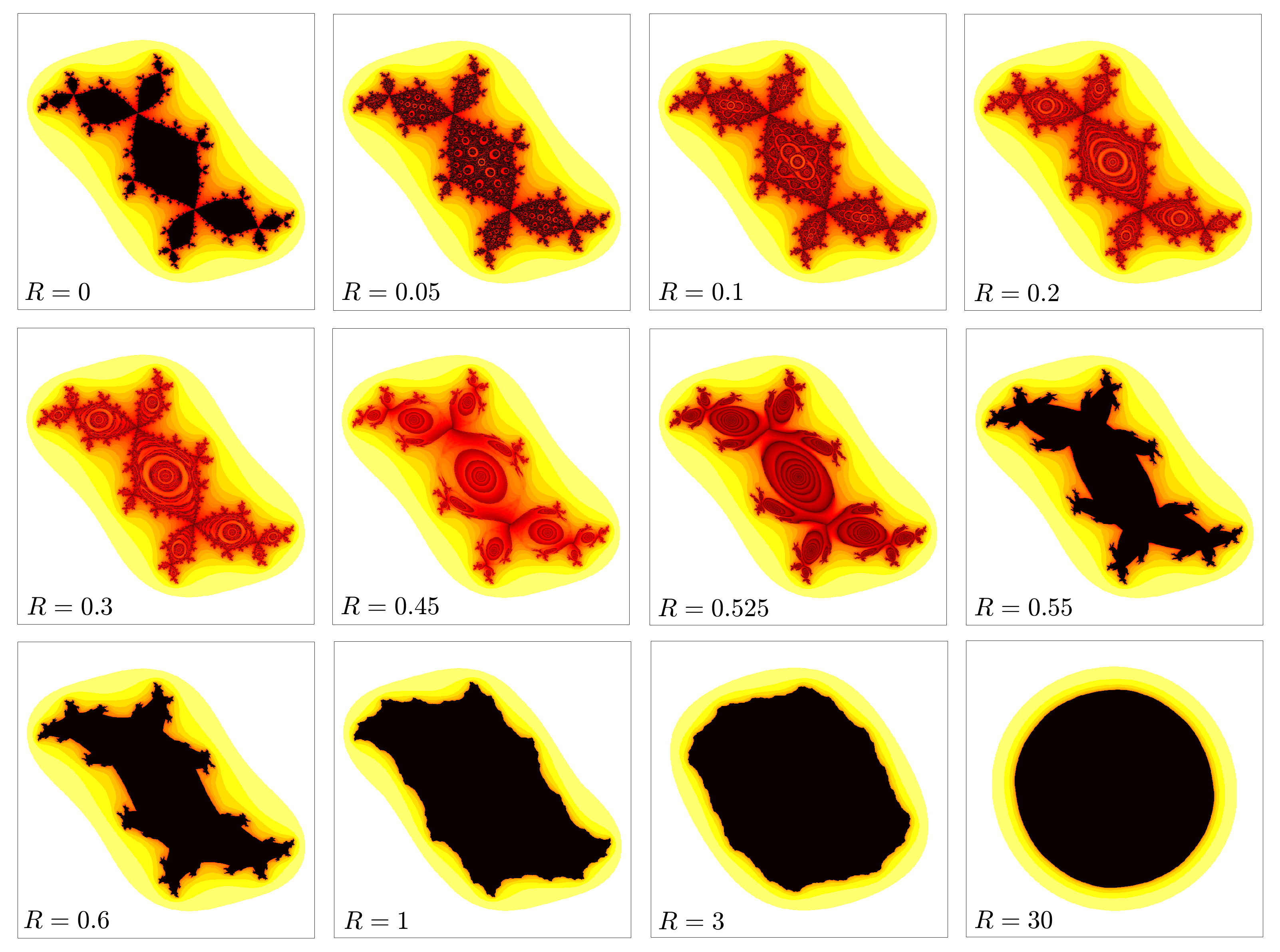}
\end{center}
\vspace{-3mm}
\caption{\small \emph{{\bf Dependence of the prisoner set on the transition radius $R$.} All panels represent the point-wise mutation $c_0=0$ acting at the origin on $c_1= -0.13+0.77i$. The transition radius is increased in panels (I) to (XII) (left to right then top down) as follows: $R=0$; $R=0.05$; $R=0.1$; $R=0.2$; $R=0.3$; $R=0.45$; $R=0.525$; $R=0.55$; $R=0.6$; $R=1$; $R=3$; $R=30$. }}
\label{increasing_R(a)}
\end{figure}

\begin{figure}[h!]
\begin{center}
\includegraphics[width=0.7\textwidth]{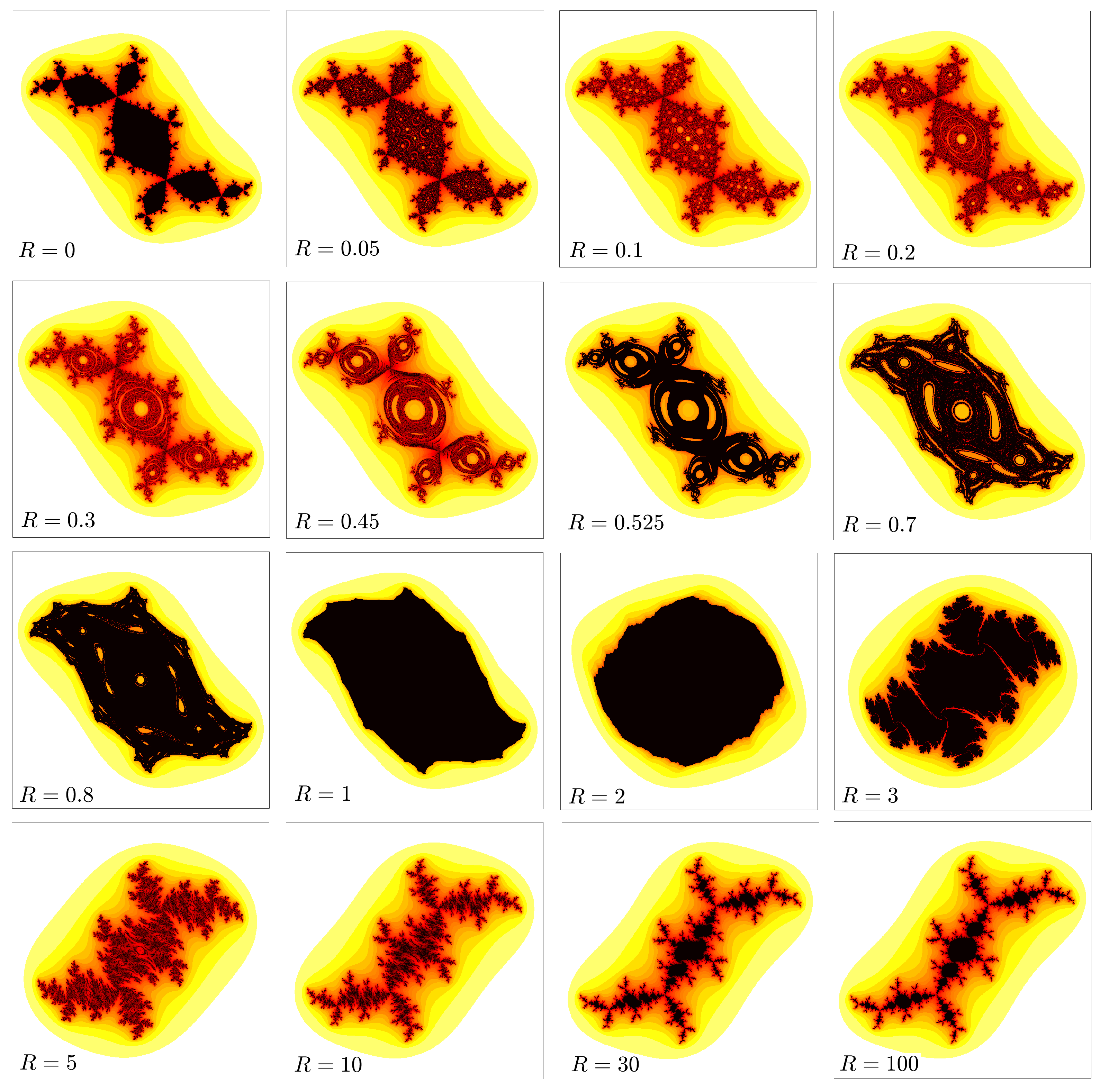}
\end{center}
\vspace{-3mm}
\caption{\small \emph{{\bf Dependence of prisoner set on the transition radius $R$.} All panels represent the point-wise mutation $c_0=-0.117-0.856i$ acting at the origin on $c_1= -0.13+0.77i$. The transition radius is increased in panels (I) to (XVI) (left to right, then top down) as follows: $R=0$; $R=0.05$; $R=0.1$; $R=0.2$; $R=0.3$; $R=0.45$; $R=0.525$; $R=0.7$; $R=0.8$; $R=1$; $R=2$; $R=3$; $R=5$; $R=10$; $R=30$; $R=100$. }}
\label{increasing_R(b)}
\end{figure}

\proof{Fix the map pair $(c_0,c_1) \in \mathbb{C}^2$, and a number of iterates $N \in \mathbb{N}$, and let $\varepsilon >0$. For any $z_0 \in \mathbb{C}$, we consider the finite set of iterates $\{ z_n = f^{\circ n}_{c_0}(z_0), n \leq N \}$, and $\{ \xi_n = f^{\circ n}(z_0), n \leq N \}$. Let $M$ be an upper bound $M = M(z_0) > \max\{ \lvert z_n \rvert, \lvert \xi_n \rvert, n \leq N \}$. First, notice that we have:
\begin{eqnarray*}
\lvert \xi_1 - z_1 \rvert = \lvert f(z_0) - f_{c_0}(z_0) \rvert &=& \left \lvert  z_0^2 + c_0 \frac{R-\lvert z_0 \rvert}{R} + c_1 \frac{\lvert z_0 \rvert}{R} -z_0^2-c_0 \right \rvert \\
&=& \lvert c_1-c_0 \rvert \frac{\lvert z_0 \rvert}{R} < \frac{\lvert c_1 - c_0 \rvert M}{R}
\end{eqnarray*} 

\noindent 
We prove inductively that, for all $n \geq 1$, we have that $\ds \lvert \xi_n - z_n \rvert < \frac{\lvert c_1 - c_0 \rvert Q_n(M)}{R}$, where $Q_n$ are polynomials with positive coefficients which do not depend on $R$. Clearly, the statement is true for $n=1$. Suppose it is true for $n$. We then have:
\begin{eqnarray*}
\lvert \xi_{n+1} - z_{n+1} \rvert &=& \lvert f(\xi_n) - f_{c_0}(z_n) \rvert = \left \lvert  \xi_n^2 + c_0 \frac{R-\lvert \xi_n \rvert}{R} + c_1 \frac{\lvert \xi_n \rvert}{R} -z_n^2-c_0 \right \rvert \\\\
&=& \lvert \xi_n^2 - z_n^2 \rvert + \lvert c_1-c_0 \rvert \frac{\lvert \xi_n \rvert}{R} < 2M \lvert \xi_n-z_n \rvert+ \lvert c_1-c_0 \rvert \frac{M}{R}\\
&<& \frac{\lvert c_1-c_0 \rvert}{R} [2MQ_n(M)+M]
\end{eqnarray*} 

\noindent Hence the statement holds for $n+1$, with $Q_{n+1}(M) = 2MQ_n(M)+M$. To conclude the proof of the theorem, one can take $K(z_0,\varepsilon) = \frac{\lvert c_1-c_0 \rvert}{\varepsilon}\max \{ Q_n(M), 0 \leq n \leq N \}$. Clearly, for $R>K$, we have that $\lvert \xi_n - z_n \rvert < \varepsilon$, for all $0 \leq n \leq N$.
}

\begin{corol} For any $\epsilon>0$, the value of $K$ in Theorem~\ref{convergence_theorem} can be chosen so that it works uniformly for all initial conditions $z_0 \in {\cal P}(f_{c_0})$.
\end{corol}

\begin{proof} The orbits $z_n = f^{\circ n}_{c_0}(z_0)$ of all prisoners $z_0 \in {\cal P}(f_{c_0})$ are uniformly bounded by the escape radius $R_e = \max \{2,\lvert c_0 \rvert \}$. As before, we have that $\lvert \xi_1 - z_1 \rvert < \frac{\lvert c_1 - c_0 \rvert R_e}{R}$, and we will show inductively that
\begin{eqnarray*}
\lvert \xi_n - z_n \rvert < \frac{\lvert c_1 - c_0 \rvert}{R} T_n \left(R_e,1/R\right)
\end{eqnarray*} 

\noindent  for all $n \geq 1$, where $T_n$ are polynomials with positive coefficients which do not depend on $z_0$. Clearly the statement is true for $n=1$. Assume it is true for $n$; then we subsequently have that 
$$\lvert \xi_n \rvert <  \frac{\lvert c_1 - c_0 \rvert}{R} T_n(R_e,1/R) + \lvert z_n \rvert <  \frac{\lvert c_1 - c_0 \rvert}{R} T_n(R_e,1/R) + R_e$$

\noindent It follows that\\

$\ds \lvert \xi_{n+1} - z_{n+1} \rvert \leq (\lvert z_n \rvert + \lvert \xi_n \rvert) \lvert \xi_n-z_n \rvert+ \lvert c_1-c_0 \rvert \frac{\lvert \xi_n \rvert}{R}$\\\\

$\ds \quad \quad \quad < \left[ 2R_e +  \frac{\lvert c_1 - c_0 \rvert}{R} T_n \left(R_e, \frac{1}{R} \right) \right]  \frac{\lvert c_1 - c_0 \rvert}{R} T_n \left( R_e, \frac{1}{R} \right) + \frac{\lvert c_1-c_0 \rvert }{R} \left[  \frac{\lvert c_1 - c_0 \rvert}{R} T_n \left( R_e,\frac{1}{R} \right)+ R_e \right]$\\\\
 
$\ds \quad \quad \quad =  \frac{\lvert c_1 - c_0 \rvert}{R} T_{n+1} \left(R_e,\frac{1}{R} \right)$\\

\noindent where
$$T_{n+1} = \left[ 2R_e + \frac{\lvert c_1 - c_0 \rvert}{R} T_n \left(R_e, \frac{1}{R} \right) \right] T_n \left( R_e, \frac{1}{R} \right) +  \frac{\lvert c_1 - c_0 \rvert}{R} T_n \left( R_e,\frac{1}{R} \right)+ R_e $$

\noindent This concludes the induction, and subsequently shows that $R$ can be made large enough (in a manner that depends on $\varepsilon$ and $R_e$, but not on the choice of $z_0 \in {\cal P}(f_{c_0})$) so that $\lvert \xi_n -z_n \rvert < \varepsilon$, for all $0 \leq n \leq N$.

\end{proof}

\begin{corol} If $z_n = f_{c_0}^{\circ n}(z_0)$ escapes, then $\xi_n = f^{\circ n}(z_0)$ also escapes, for large enough $R$.
\end{corol}

\begin{proof} The result follows directly from Theorem~\ref{convergence_theorem}, and the existence of an escape radius. 
\end{proof}

\noindent In the path from ``almost'' ${\cal P}(f_{c_1})$ (for $R \searrow 0$) to ``almost'' ${\cal P}(f_{c_0})$ (for $R \nearrow \infty$), the prisoner set undergoes a complex cascade of bifurcations, which depends on the structure of the original and of the target sets. Our simulations suggest that, as $R$ increases from zero, ${\cal P}(f)$ breaks into a number of connected components; the original bifurcation (from a single to multiple connected components) may occur close to $R=0$ or later in the process (as illustrated in Figure~\ref{increasing_R(a)} for $c_1= -0.13+0.77i$ and $c_0=0$, and in Figure~\ref{increasing_R(b)}, as well as in Figures~\ref{center_1} and~\ref{center_diag} in the Appendix). These components may later re-merge into one, either via another kneading cascade, or via one single sharp phase transition. However, depending on the case, this connected set may undergo a further connectivity altering bifurcations, in order to finally converge to the target set ${\cal P}(f_{c_0})$, in the sense described in Theorem~\ref{convergence_theorem}.

%
%
%
%
%
%
%


\section{Discussion}
\label{discussion}

\subsection{Specific comments on the model}

This paper lays down the bases of a symbolic context for analyzing the effect of local mutations in a genetic replication process. In this setup, the complex plane $\mathbb{C}$ represents the expression of the features to be replicated (genes in a cell's RNA). A correct replication would consist of repeatedly applying the intact complex map $f_{c_1} = z^2 + c_1$ to the complex plane, resulting each time into a new copy of $\mathbb{C}$, used as template for subsequent replications. A local mutation is inserted into the iteration, acting as an erroneous function $f_{c_0}$ within a small local disk around a focus point in $\mathbb{C}$, and continuously interpolating to $f_{c_1}$ within a ``transition'' annulus around the mutation disk. We analyzed potential behaviors of the asymptotic trajectories under iterations of the resulting mutated system versus intact replication. Our study focused on observing and analyzing the effects of the size of the mutation and of the transition radii on the topology of the prisoner set. Some of these effects were intuitively straightforward; others were more complex and counter-intuitive. Here, we discuss the results and further interpret them within the phenomenology of genetic differentiation and cell functional specialization in an organism.

Our analysis showed that a point-wise mutation with very small transition annulus $r=0$ and $R \searrow 0$) will not make any noticeable changes to the asymptotic dynamics (the prisoner set is the same as the intact prisoner set almost everywhere), while a mutation with a large enough transition radius will eventually represent asymptotically the mutated replicator, rather than the intact one (the prisoner set converges to that of the mutation when $R$ approaches infinity). In reality, this corresponds to a model prediction that mutation of only a few genes, even if it occurs repeatedly, will not significantly affect gene expression in the differentiated cell, while mutation to a wide genetic locus will cause the terminal differentiation of the cell to primarily represent the mutated clones.

More interesting aspects appear when tracking the transition from mutations that affect a very small to a very large genetic locus (i.e., from $R \searrow 0$ to $R \nearrow \infty$). While the particular effects depend widely on the type of replicator and mutation (that is, on the complex parameters $c_1$ and $c_0$), we found that all paths along increasing $R$ exhibit phase transitions. In other words, the evolution of the prisoner set, rather than moving smoothly with increasing $R$, undergoes bifurcation points characterized by sudden changes in topology, in particular in the number of connected components. There are a few interesting implications. An immediate note would be related to the very idea of bifurcation: while some increments in $R$ may leave the asymptotic outcome relatively unperturbed, it also appears that there are critical values of $R$ where even a very small additional increment has the potential to dramatically change the prisoner set.

Moreover, these bifurcation cascades do not typically start at $R=0$, but rather at the critical $R=R_0 >0$ where $D(R)$ breaks out of ${\cal P}(f_{c_1})$ (e.g., for $c_1=-1.13+0.77i$, the first connectivity braking point occurs at $R_0 \sim 0.3$). This implies the existence in these cases of a ``safe'' interval $[0,R_0]$ in which the prisoner set ${\cal P}(f) \subseteq {\cal P}(f_{c_1})$ changes smoothly with $R$ and is still generally reminiscent of the intact ${\cal P}(f_{c_1})$. In other words, mutations which affect a small enough genetic locus will only manifest as omissions in the terminal differentiation locus of the cell. Interestingly, the subset of omitted genes does not necessarily increase with increasing $R$: within this range, a larger mutation locus does not necessarily imply a harder impact on the eventual cell outcome. In fact, this dependence lacks ``monotonicity'' along $R \nearrow \infty$, in the sense that, for some values of $R$, the prisoner set is more similar to the intact ${\cal P}(f_{c_1})$ than it is for both smaller and larger values around (as for example panel (VIII) in Figure~\ref{increasing_R(a)}). Varying the size of the mutation locus itself (that is, the size of $r$) further modulates these effects.


\subsection{Future work}

Our first results suggest a strong dependence of the prisoner set on the mutation error (the local parameter $c_0$), in combination with the intact parameter $c_1$ that should ideally govern the correct replication process along the cell's specialization path. The traditional theory of single iterated quadratic maps implies that different replicators lead to a large variety of prisoner sets. What we would like to quantify and investigate further is the way in which the geometric perturbation of the prisoner set becomes more significant with a larger mutation (i.e., a larger $|c_1 - c_0|$). We can then analyze how this effect further depends on the replicator $c_1$ itself.

Another short term goal in our future work is to extend this model to the situation where the mutation locus is centered at a focus different than the origin. This is important, because it would allow investigating the effects of mutations that affect expression of different RNA segments. Mathematically, this extension perturbs the symmetry of the simpler model (with the mutation focus at the origin) and the effects to the prisoner set will reflect this symmetry braking, as illustrated in Figures~\ref{center_1} and~\ref{center_diag} in the Appendix.

While the current model, describing a persistent (or permanent) local mutation, is both simpler mathematically and consistent with the dynamics of a system generated by a single iterated map, in reality mutations do not affect expression of the same RNA targets at each and every step of the replication. Moreover, in an actual dividing cell, errors in RNA replication are sometimes immediately addressed by molecular repair processes, which in our model may be considered equivalent to reversal to the correct iteration when a mutation is detected. Future extensions of our model will consider the mutation being introduced at selected (specific or random) times in the iteration process, allowing us to investigate the effects of timing of mutation onset and repair to the accuracy of replication and terminal differentiation of the cell. This would place the model within the mathematical framework of non-autonomous iterations, which has been previously studied by the authors in the context of template iterations, in absence of mutations~\cite{radulescu2016symbolic,radulescu2016template}.

Another aspect we plan to further study in future work relates to potential applications to modeling neoplastic processes and tumor growth. It has been known for some time that combinations of multiple mutations, rather than a single mutation, might play more important roles in the genesis of tumors~\cite{park2015cancer,ashworth2011genetic}. However, the cooperative mechanisms by which accumulation of multiple somatic mutations lead to cancer and tumor growth are poorly understood. Newer work emphasizes the importance of investigating the formation and evolution of dynamic clusters of mutation-propagating modules. For example, a study of colorectal tumorigenesis found that the mutation network undergoes a phase transition from scattered small modules to a large connected cluster, accompanied by phenotypic changes corresponding to cancer hallmarks~\cite{shin2017percolation}. Our mathematical approach presents great potential for further investigating new hypotheses of cluster dynamics in a theoretical modeling framework. To do so, we would have to allow the radius $R$ (which can be viewed to define the boundaries of the cluster) to evolve in time, and interact with the rest of the network, as illustrated in Figure~\ref{increasing_R}.

\begin{figure}[h!]
\begin{center}
\includegraphics[width=0.75\textwidth]{Figure9.png}
\end{center}
\vspace{-4mm}
\caption{\small \emph{{\bf Prisoner sets for transition radius $R$ increasing in time} as $R=0.05n$, where $n$ is the iteration step. Each panel represents a different point-wise mutation at the origin, as follows: {\bf A.} $c_0=0$, $c_1= -0.13+0.77i$; {\bf B.} $c_0=-0.117-0.856i$, $c_1= -0.13+0.77i$; {\bf C.} $c_0=0.33$, $c_1= -0.13+0.77i$; {\bf D.} $c_0= -0.13+0.77i$, $c_1=0.33$.}}
\label{increasing_R}
\end{figure}

In the context of tumorigenesis, the moment when a persistent mutation occurs is also known to be of essence~\cite{koh2021mutational}. To better understand this aspect, in our future work we aim to explore the impact of the timing of a persistent mutation onset in our model. A simple way to approach this is to allow a time window for intact replication before applying the mutation to the function. One can then observe the differences produced in the prisoner set by introducing the mutation at different times (see Figure~\ref{mutation_timing}), as well as the evolution of the finite-time prisoners, and the appearance of ``clones'' along the timeline, similarly to clonal evolution in a fish plot~\cite{miller2016visualizing} when tracking oncogenesis (see Figure~\ref{timeline}).

\begin{figure}[h!]
\begin{center}
\includegraphics[width=0.75\textwidth]{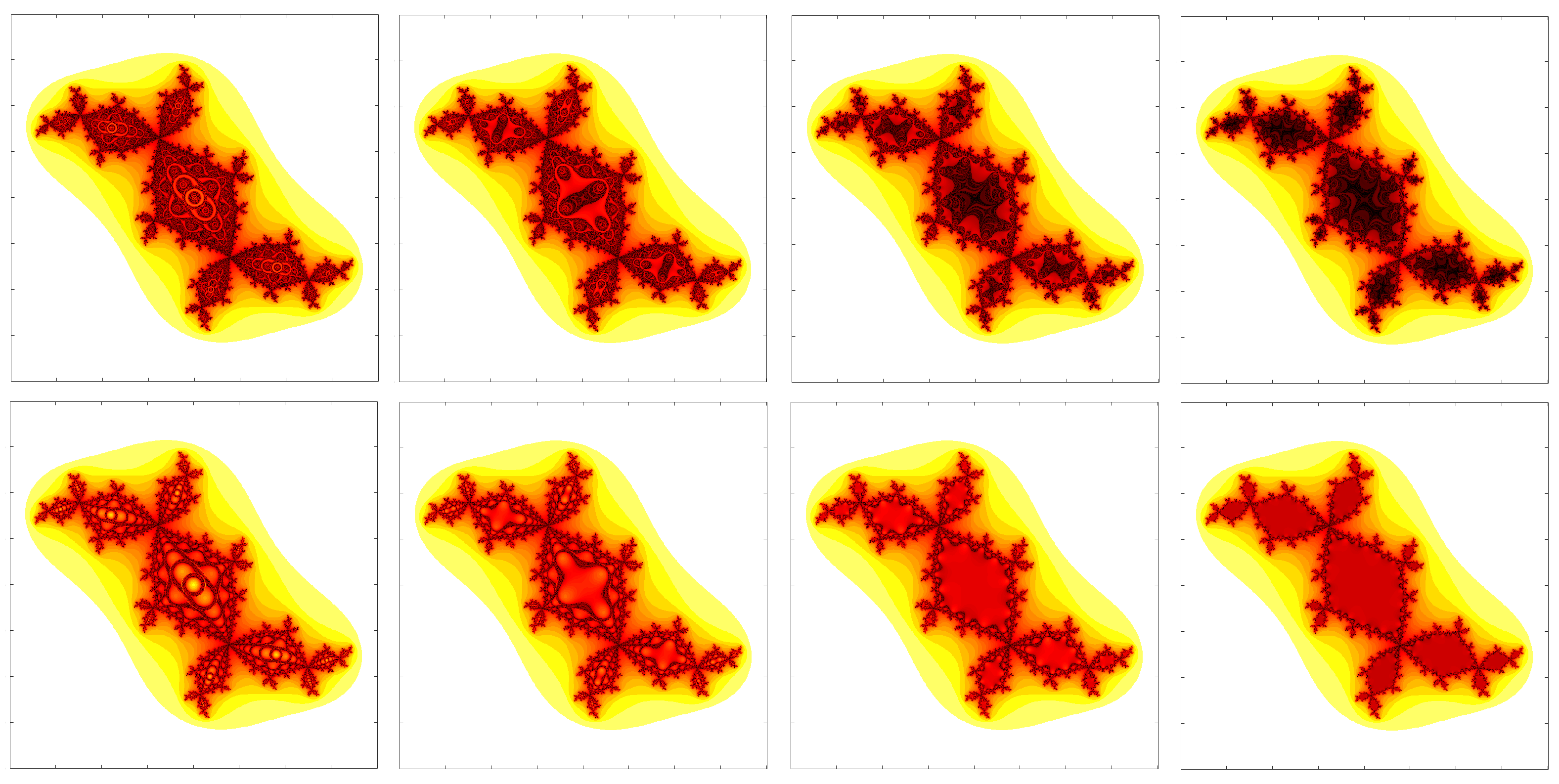}
\end{center}
\vspace{-4mm}
\caption{\small \emph{{\bf Changes induced in the prisoner sets by changing the timing of the mutation.} point-wise mutations $c_0=0$ (unit disk prisoner set, top row) and $c_0=-0.8-0.156i$ (dragon prisoner set, bottom row) are applied at the origin, with transition radius $R=0.1$, to the intact map $c_1 =-0.13+0.77i$ (Douady rabbit), after $n=3$, $n=5$, $n=10$ and respectively $n=15$ iterations steps (so that the replication is left intact for the specified number of initial iterations, after which the persistent mutation is applied).}}
\label{mutation_timing}
\end{figure}

\begin{figure}[h!]
\begin{center}
\includegraphics[width=0.95\textwidth]{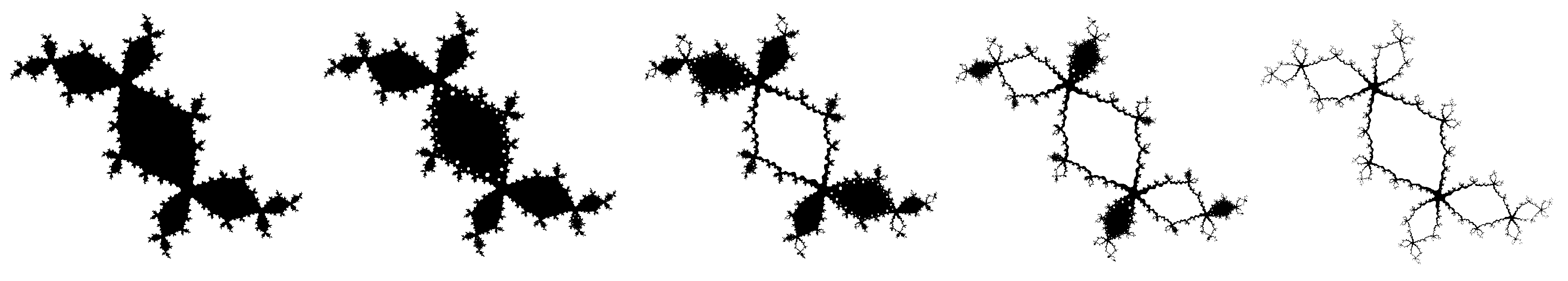}
\end{center}
\vspace{-4mm}
\caption{\small \emph{{\bf Changes in the finitely iterated prisoner set along the iteration timeline}, before and after mutation. The point-wise mutation $c_0=-0.8-0.156i$ is introduced at the origin at iteration step $n=15$, with fixed transition radius $R=0.3$. The panels show the points that are still within the escape disc after $n=20$, $n=21$, $n=22$, $n=23$ and respectively $n=24$ iterations.}}
\label{timeline}
\end{figure}

Finally, many of the choices made for the functions and transitions in our model are preliminary, and can be further altered to better match both biological and mathematical needs. For example, the interpolation used to define the decaying effects of mutation within the transition annulus is preliminary, and one of our goals is to improve its smoothness in future versions of the model. Analyticity of the function $f$ would bring significant mathematical advantages to the model than mere continuity, and would also likely represent a more realistic implementation of the biological phenomenology. More generally, the choice for using quadratic complex functions was based on their relative simplicity versus other models, and on the considerable amount of existing work and prior knowledge of dynamics in this family, both in the context of single iterated maps as well as that of random iterations. The same concepts can be investigated for a different dynamic family, creating the opportunity to explore the universality of mutation effects between families.

\section*{Acknowledgments}

We would like to thank Dr.~Luis Zapata Ortiz and Dr.~John Lowengrub for the useful conversations. This work is supported by a Research, Scholarship and Creative Activities Academic Year Undergraduate Research Experience (Longbotham), by a Simons Foundation Collaborative Grant for Mathematicians and an AMS-Simons PUI Faculty Grant (R\v{a}dulescu). \\


\bigskip
%
%
%
%
%
\clearpage

\section*{Appendix: Effects of mutations centered at $\xi^* \neq 0$}

\vspace{-4mm}
\begin{figure}[h!]
\begin{center} 
\includegraphics[width=0.7\textwidth]{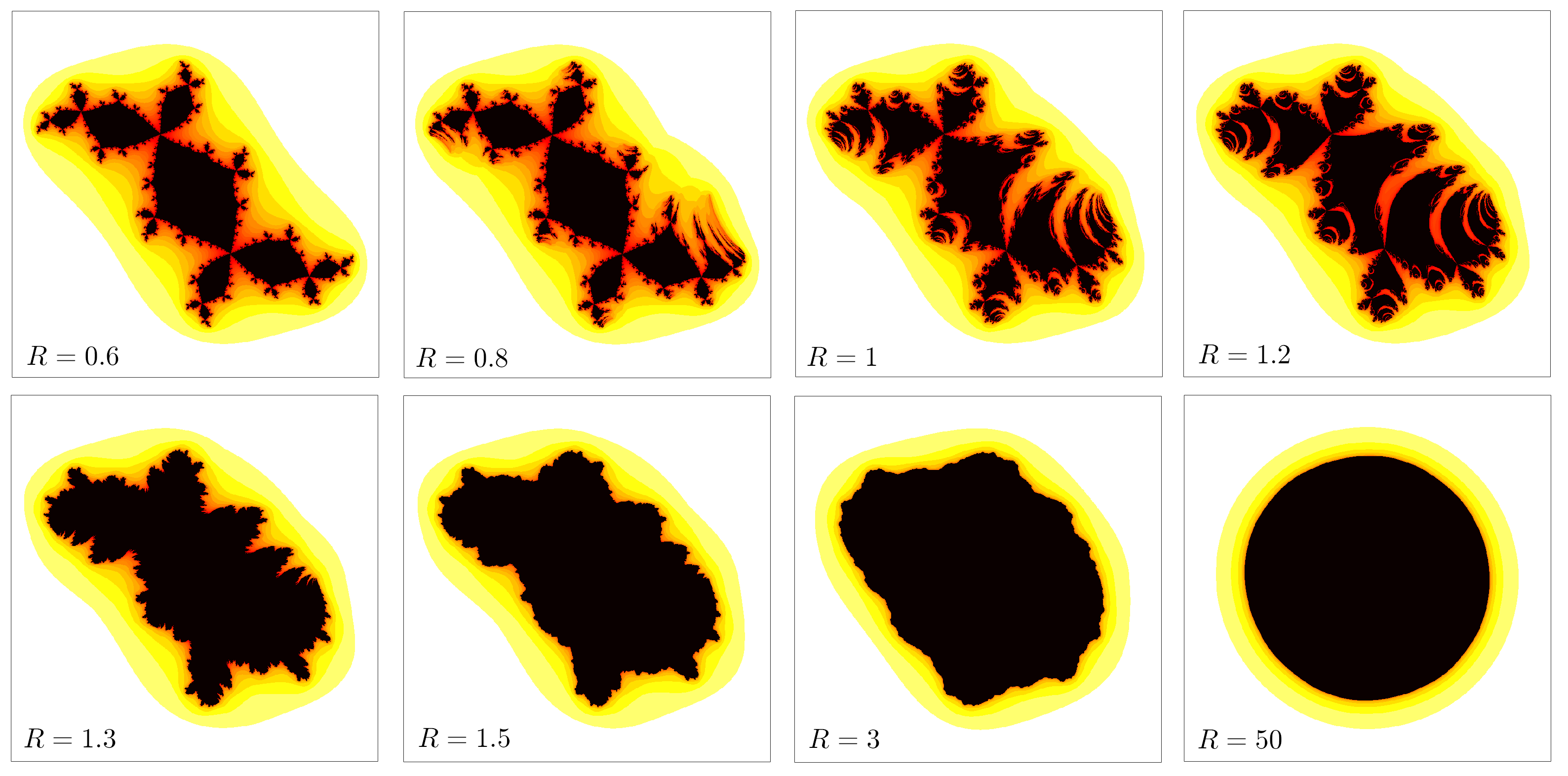}
\end{center}
\vspace{-4mm}
\caption{\small \emph{{\bf Dependence of the prisoner set on the mutation radius $r$.} All panels represent the mutation $c_0=0$ acting point-wise on $c_1= -0.13+0.77i$ at $\xi^*=1$. The transition radius is increased in panels (I) to (VIII) as follows: $R=0.6$; $R=0.8$; $R=1$; $R=1.2$; $R=1.3$; $R=1.5$; $R=3$; $R=50$.}}
\label{center_1}
\end{figure}

\vspace{-4mm}
\begin{figure}[h!]
\begin{center}
\includegraphics[width=0.7\textwidth]{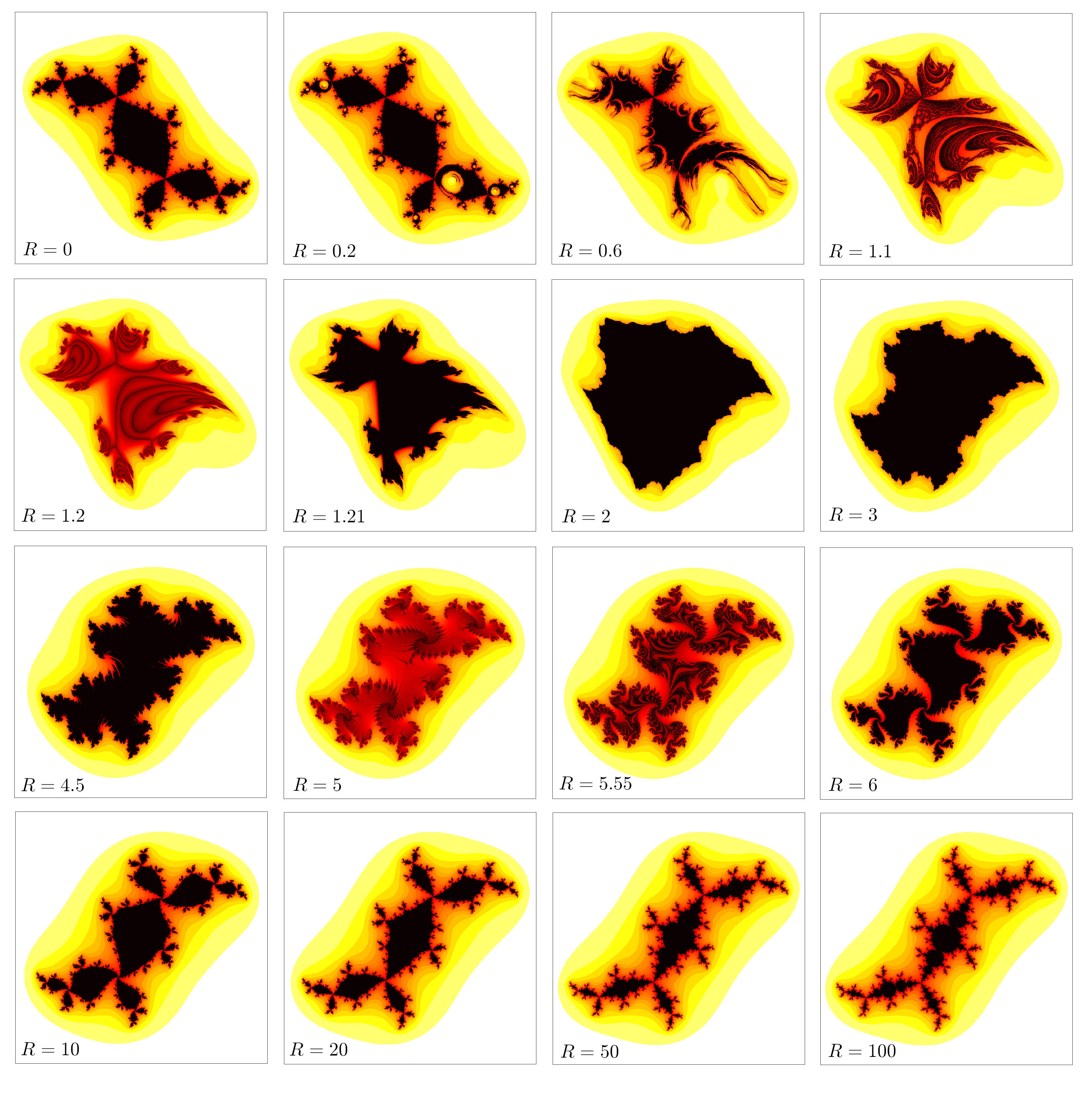}
\end{center}
\vspace{-5mm}
\caption{\small \emph{{\bf Dependence of the prisoner set on the mutation radius $r$.} All panels represent the mutation $c_0=-0.117-0.856i$ acting point-wise on $c_1= -0.13+0.77i$ at the focus $\xi^* = \frac{1+i}{2}$. The transition radius is increased in panels (I) to (XVI) as follows: $R=0$; $R=0.2$; $R=0.6$; $R=1.1$; $R=1.2$; $R=1.21$; $R=2$; $R=3$; $R=4.5$; $R=5$; $R=5.55$; $R=6$; $R=10$; $R=20$; $R=50$; $R=100$.}}
\label{center_diag}
\end{figure}

\clearpage
\bibliographystyle{plain}
\bibliography{references}

\end{document}